\newtheorem*{KN}{Crazy Knight's Tour Problem}
\def\Z{\mathbb{Z}}
\def\N{\mathrm{N}}
\def\Q{\mathrm{Q}}
\newcommand{\probname}{Crazy Knight's Tour Problem}
\def\H{\mathrm{H}}
\def\E{\mathcal{E}}
\def\R{\mathcal{R}}
\def\C{\mathcal{C}}
\newtheorem{thm}{Theorem}[section]
\newtheorem{lem}[thm]{Lemma}
\newtheorem{cor}[thm]{Corollary}
\newtheorem{prop}[thm]{Proposition}
\newtheorem{rem}[thm]{Remark}
\theoremstyle{definition}
\newtheorem{defin}[thm]{Definition}
\newtheorem{ex}[thm]{Example}
\numberwithin{equation}{section}
\def\Z{\mathbb{Z}}
\title[Archdeacon's embeddings]{Biembeddings of Archdeacon type: their full automorphism group and their number}
\author{Simone Costa}
\address{DICATAM, Universit\`a degli Studi di Brescia, Via
Branze 43, 25123 Brescia, Italy}
\email{simone.costa@unibs.it}
\keywords{Heffter Arrays, Archdeacon Embedding, Full Automorphism Group, Non-isomorphic Embeddings, Probabilistic Method.}
\subjclass[2010]{05C10, 05C60, 05B20, 05C30, 05C15, 05D40}
\begin{document}
\maketitle
\begin{abstract}
Archdeacon, in his seminal paper \cite{A}, defined the concept of a Heffter array in order to provide explicit constructions of $\mathbb{Z}_{v}$-regular biembeddings of complete graphs $K_v$ into orientable surfaces.

In this paper, we first introduce the \emph{quasi}-Heffter arrays as a generalization of the concept of Heffer array and we show that, in this context, we can define a $2$-colorable embedding of Archdeacon type of the complete multipartite graph $K_{\frac{v}{t}\times t}$ into an orientable surface. Then, our main goal is to study the full automorphism groups of these embeddings: here we are able to prove, using a probabilistic approach, that, almost always, this group is exactly $\mathbb{Z}_{v}$.

As an application of this result, given a positive integer $t\not\equiv 0\pmod{4}$, we prove that there are, for infinitely many pairs of $v$ and $k$, at least $(1-o(1)) \frac{(\frac{v-t}{2})!}{\phi(v)} $ non-isomorphic biembeddings of Archdeacon type of $K_{\frac{v}{t}\times t}$ whose face lengths are multiples of $k$. Here $\phi(\cdot)$ denotes the Euler's totient function. Moreover, in case $t=1$ and $v$ is a prime, almost all these embeddings define faces that are all of the same length $kv$, i.e. we have a more than exponential number of non-isomorphic $kv$-gonal biembeddings of $K_{v}$ of this type.
\end{abstract}
\section{Introduction}
An $m\times n$ partially filled (p.f., for short) array on a set $\Omega$ is an $m \times n$ matrix whose elements belong to $\Omega$
and where some cells can be empty. In 2015, Archdeacon (see \cite {A}), introduced a class of p.f. arrays which have been extensively studied:
the \emph{Heffter arrays}.
\begin{defin}\label{def:H}
A \emph{Heffter array} $\H(m,n; h,k)$ is an $m \times n$ p.f. array with entries in $\Z_{2nk+1}$ such that:
\begin{itemize}
\item[(\rm{a})] each row contains $h$ filled cells and each column contains $k$ filled cells,
\item[(\rm{b})] for every $x\in \Z_{2nk+1}\setminus\{0\}$, either $x$ or $-x$ appears in the array,
\item[(\rm{c})] the elements in every row and column sum to $0$ (in $\Z_{2nk+1}$).
\end{itemize}
\end{defin}

These arrays were introduced because of their vast variety of applications and links to other problems and concepts, such as orthogonal cycle decompositions and $2$-colorable embeddings (briefly \emph{biembeddings}), see for instance \cite{A, CDY, DM}. The existence problem of Heffter arrays has also been deeply investigated starting with \cite{ADDY}: we refer to the survey \cite{DP} for the known results in this direction.
This paper will focus mainly on the connection between p.f. arrays and embeddings. To explain this link, we first recall some basic definitions, see \cite{Moh, MT}.
\begin{defin}
Given a graph $\Gamma$ and a surface $\Sigma$, an \emph{embedding} of $\Gamma$ in $\Sigma$ is a continuous injective mapping $\psi: \Gamma \rightarrow \Sigma$, where $\Gamma$ is viewed with the usual topology as a $1$-dimensional simplicial complex.
\end{defin}
The connected components of $\Sigma \setminus \psi(\Gamma)$ are said to be $\psi$-\emph{faces}. Also, with abuse of notation, we say that a circuit $F$ of $\Gamma$ is a face (induced by the embedding $\psi$) if $\psi(F)$ is the boundary of a $\psi$-face. Then, if each $\psi$-face is homeomorphic to an open disc, the embedding $\psi$ is called \emph{cellular}.
In this context, we say that two embeddings $\psi: \Gamma \rightarrow \Sigma$ and $\psi': \Gamma' \rightarrow \Sigma'$ are \emph{isomorphic} if and only if there is a graph isomorphism $\sigma: \Gamma\rightarrow \Gamma'$ such that $\sigma(F)$ is a $\psi'$-face if and only if $F$ is a $\psi$-face.

Archdeacon, in his seminal paper \cite{A}, showed that, if some additional technical conditions are satisfied, Heffter arrays provide explicit constructions of $\mathbb{Z}_{v}$-regular biembeddings of complete graphs $K_v$ into orientable surfaces.
Following \cite{CostaDellaFiorePasotti, CM} and \cite{CostaPasotti2} the embeddings defined, using this construction, via partially filled arrays, will be denoted as \emph{embeddings of Archdeacon type} or, more simply, \emph{Archdeacon embeddings}. Indeed, this kind of embedding can be considered also for more general arrays than the Heffter's. In \cite{RelH}, the authors introduced the concept of a \emph{relative} Heffter array and, in \cite{CPPBiembeddings}, it was proved that it can be used, with essentially the same construction of \cite{A}, to embed the complete multipartite graph with $v/t$ parts each of size $t$, denoted by $K_{\frac{v}{t}\times t}$, into orientable surfaces. More recently, in \cite{CostaDellaFiorePasotti} the authors introduced a variation of the Heffter arrays, denoted by \emph{non-zero sum Heffter arrays} (see also \cite{CDF, PM, MT1})  and showed that, also in this case, that embedding is well defined. In this paper, we first provide a generalization (already reported in the survey \cite{DP}, Definition 6.56) of both the relative \emph{Heffter} and the \emph{non-zero sum Heffter} arrays, and then we define the Archdeacon embedding in this more general context. 
\begin{defin}\label{def:QH}
Let $v=2nk+t$ be a positive integer,
where $t$ divides $2nk$, and
let $J$ be the subgroup of $\Z_{v}$ of order $t$.
A \emph{quasi}-\emph{Heffter array $A$ over $\Z_{v}$ relative to $J$}, denoted by $\Q\H_t(m,n; h,k)$, is an $m\times n$ p.f. array with elements in $\Z_{v}$ such that:
\begin{itemize}
\item[($\rm{a_1})$] each row contains $h$ filled cells and each column contains $k$ filled cells,
\item[($\rm{b_1})$] the multiset $[\pm x \mid x \in A]$ contains each element of $\Z_v\setminus J$ exactly once.
\end{itemize}
If, moreover, also the following property holds, then $A$ is said to be a \emph{non-zero sum} \emph{Heffter array $A$ over $\Z_{v}$ relative to $J$}, and it is denoted by $\N\H_t(m,n; h,k)$.
\begin{itemize}
\item[($\rm{c_1})$] The sum of the elements in every row and column is different from $0$ (in $\Z_v$).
\end{itemize}
\end{defin}
Also, as done with the Heffter arrays and the non-zero sum Heffter arrays, a square quasi-Heffter array will be simply denoted by $\Q\H_t(n;k)$ or, if $t=1$, by $\Q\H(n;k)$.
\begin{ex}\label{EsempiQH}
Let $v=21$ and let $J$ be the subgroup of $\Z_{21}$ given by $7\Z_{21}=\{0,7,14\}$. Consider the array:
$$
A_1=\begin{array}{|r|r|r|}\hline
-1 & -2& 3 \\ \hline
10 & -5 &6 \\ \hline
8 & 9 & -4 \\ \hline
\end{array}
.$$
We note that the elements of $[\pm x \mid x \in A]$ are exactly $\Z_{21}\setminus J $ and hence $A$ is a $\Q\H_3(3; 3)$. On the other hand, since the sum of elements in the first row is zero, $A$ is not a $\N\H_3(3;3)$.

On the other hand, if we just change the sign of the element in position $(1,1)$, we obtain the array
\[
A_2=\begin{array}{|r|r|r|}\hline
1 & -2& 3 \\ \hline
10 & -5 &6 \\ \hline
8 & 9 & -4 \\ \hline
\end{array}
\]
which rows and columns all have non-zero sums. In particular, the vector of the row sums is $(2,11,13)$ and the vector of the column sums is $(19, 2, 5)$ and hence $A_2$ is a $\N\H_3(3;3)$.

Since the set $\Z_{21}\setminus J $ is closed under multiplication of invertible elements of $\Z_{21}$, we can obtain a third example by simply multiplying $A_2$ by $2$. This means that the following array, $A_3$, is also a $\N\H_3(3; 3)$. 
$$
A_3=2A_2=\begin{array}{|r|r|r|}\hline
2 & -4& 6 \\ \hline
-1 & -10 &-9 \\ \hline
-5 & -3 & -8 \\ \hline
\end{array}.
$$
\end{ex}

A formal definition of the Archdeacon embedding, starting from suitable quasi-Heffter arrays, will be given in Section $2$. Then, we will study the full automorphism group of these kinds of embeddings. As remarked in \cite{A} (see also \cite{CPPBiembeddings}), the embeddings of Archdeacon type are $\mathbb{Z}_{v}$-regular where, if we start from an $\H_t(m,n; h,k)$, $v=2kn+t$. Another interesting result about these embeddings has been presented in \cite{CM}, where it was shown (by presenting a class of examples) that the automorphism group could be strictly larger than $\mathbb{Z}_{v}$. Indeed, as an application of the interesting class of arrays recently introduced by Buratti in \cite{B}, they exhibited, for infinitely many values of $v$, an embedding of this type having a full automorphism group of size ${v \choose 2}$, which is the largest possible size.

On the other hand, the results exposed in this paper show that these very regular embeddings are a rarity among all of Archdeacon's embeddings. Indeed, here, we will first show that, if we construct the embedding starting from a $\Q\H_t(m,n; h,k)$ or from an $\N\H_t(m,n; h,k)$, almost always its full automorphism group is exactly $\mathbb{Z}_{v}$. These two theorems will be proved in Section 3 of this paper. Then, in Section 4, we will show that, given a positive integer $t\not\equiv{0}\pmod{4}$, there are, for infinitely many pairs of $v$ and $k$, at least $(1-o(1))(\frac{v-t}{2})! $ different biembeddings of Archdeacon type of $K_{\frac{v}{t}\times t}$ whose face lengths are multiples of $k$ and whose automorphisms group is exactly $\mathbb{Z}_{v}$. Finally, in the last section, we will apply these results to show that we also have (again given a positive integer $t\not\equiv{0}\pmod{4}$ and for infinitely many pairs of $v$ and $k$), at least $(1-o(1)) \frac{(\frac{v-t}{2})!}{\phi(v)} $ non-isomorphic Archdeacon embeddings of $K_{\frac{v}{t}\times t}$ whose face lengths are multiples of $k$. Here $\phi(\cdot)$ denotes the Euler's totient function. Moreover, if we start from quasi-Heffter arrays of type $\Q\H(n;k)$ and $v=2nk+1$ is a prime, almost all these embeddings define only faces of length $kv$, i.e. we have more than an exponential number of non-isomorphic $kv$-gonal Archdeacon biembeddings of $K_{v}$.
Even though, perhaps, this kind of lower-bound is not unexpected, we believe it can be of some interest also from the graph-theoretical point of view. Indeed the number of non-isomorphic embeddings of complete graphs whose faces are of a given length is a well-studied problem (see, for instance, \cite{ GK10A, LNW}) and in some situations, only exponential bounds are known (see \cite{Korzhik,Korzhik2}).
\section{The Archdeacon Embedding}
Following \cite{GG,GT, JS}, we provide an equivalent, but purely combinatorial, definition of graph embedding into a surface.
Here, we denote by $D(\Gamma)$ the set of all the oriented edges of the graph $\Gamma$ and, given a vertex $x$ of $\Gamma$, by $N(\Gamma,x)$ the neighborhood of $x$ in $\Gamma$.
\begin{defin}\label{DefEmbeddings}
Let $\Gamma$ be a connected graph. A \emph{combinatorial embedding} of $\Gamma$ (into an orientable surface) is a pair $\Pi=(\Gamma,\rho)$ where $\rho: D(\Gamma)\rightarrow D(\Gamma)$ satisfies the following properties:
\begin{itemize}
\item[(a)] for any $y\in N(\Gamma,x)$, there exists $y'\in N(\Gamma,x)$ such that $\rho(x,y)=(x,y')$,
\item[(b)] we define $\rho_x$ as the permutation of $N(\Gamma,x)$ such that, given $y\in N(\Gamma,x)$, $\rho(x,y)=(x,\rho_x(y))$. Then the permutation $\rho_x$ is a cycle of order $|N(\Gamma,x)|$.
\end{itemize}
If properties $(a)$ and $(b)$ hold, the map $\rho$ is said to be a \emph{rotation} of $\Gamma$.
\end{defin}
Then, as reported in \cite{GG}, a combinatorial embedding $\Pi=(\Gamma,\rho)$ is equivalent to a cellular embedding $\psi$ of $\Gamma$ into an orientable surface $\Sigma$ (see also \cite{A}, Theorem 3.1).
\begin{ex}\label{DinitzFig}
\begin{figure}
\centering
\includegraphics[width=240pt,height=160pt]{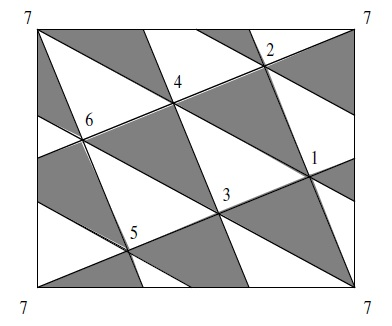}
\caption{A biembedding of $K_7$ into the torus. This picture is taken from \cite{JS}.}
\label{fig1}
\end{figure}
As explained in \cite{MT}, the rotation $\rho$ associated with the cellular embedding of Figure \ref{fig1} can be obtained through the following steps.
\begin{itemize}
\item[1)] Given $x\in \{1,2,3,4,5,6,0\}$, we define the map $\rho_x$ by looking at the neighbourgs of $x$ in a clockwise order.
For example, given $x=3$ we have that $\rho_3$ is the cyclic permutation of $\{1,2,4,5,6,0\}$ given by $(4,1,0,2,5,6)$.
\item[2)] Due to property $(b)$ of the definition of combinatorial embedding we have that $\rho((x,y))= (x,\rho_x(y))$.
\end{itemize}
Clearly, these steps are enough to determine the map $\rho$. However, in this case, we can also choose just one $x\in \{1,2,3,4,5,6,0\}$, define, as above, the map $\rho$ on the oriented edges that start from $x$, and then define the map $\rho$ for the other ones as follows.
\begin{itemize}
\item[3)] We note that every oriented edge of $K_7$ can be written in the form $(x+z,y+z)$ where the sum is performed modulo $7$. Here we set $\rho((z+x,z+y))=(z+x,z+\rho_x(y))$.
\end{itemize}
If we do so, we can easily check that the expression we obtain for $\rho$ is independent of the initial vertex $x$.
\end{ex}
Now we show that the Archdeacon embedding can be defined also starting from quasi-Heffter arrays. We first introduce some notation.
The rows and the columns of an $m\times n$ array $A$ are denoted by $R_1,\ldots, R_m$ and by $C_1,\ldots, C_n$, respectively. Also we denote by $\E(A)$, $\E(R_i)$, $\E(C_j)$ the list of the elements of the filled cells of $A$, of the $i$-th row and of the $j$-th column, respectively. Given an $m\times n$ p.f. array $A$, by $\omega_{R_i}$ and $\omega_{C_j}$ we denote a cyclic ordering of $\E(R_i)$ and $\E(C_j)$, respectively and we define by $\omega_r=\omega_{R_1}\circ \cdots \circ \omega_{R_m}$ the ordering for the rows and by $\omega_c=\omega_{C_1}\circ \cdots \circ \omega_{C_n}$ the ordering for the columns.
\begin{defin}\label{Compatible}
Given a quasi-Heffter array $A$, the orderings $\omega_r$ and $\omega_c$ are said to be \emph{compatible} if $\omega_c \circ \omega_r$ is a cycle of order $|\E(A)|$.
\end{defin}
\begin{ex}\label{EsempiCompatible}
Let us consider the array $A_2$ of Example \ref{EsempiQH}
$$
A_2=\begin{array}{|r|r|r|}\hline
1 & -2& 3 \\ \hline
10 & -5 &6 \\ \hline
8 & 9 & -4 \\ \hline
\end{array}.$$
Here we consider the cyclic orderings 
$$ \omega_{R_1}:=(1,-2,3);$$
$$ \omega_{R_2}:=(10,-5,6);$$
$$ \omega_{R_3}:=(8,9,-4);$$
and
$$ \omega_{C_1}:=(1,10,8);$$
$$ \omega_{C_2}:=(-2,-5,9);$$
$$ \omega_{C_3}:=(-4,6,3).$$
These orderings define, respectively, the ordering for the rows
$$\omega_r:=(1,-2,3)(10,-5,6)(8,9,-4)$$
and the ordering of the columns
$$\omega_c:=(1,10,8)(-2,-5,9)(-4,6,3).$$
Here we have that 
$$\omega_c\circ \omega_r=(1,-5,3,10,9,6,8,-2,-4)$$
which is a cycle of order $9=|\E(A_2)|$. Thus, the orderings $\omega_r$ and $\omega_c$ are compatible.
Also, note that, considering the array $A_3=2A_2$ of Example \ref{EsempiQH}, we can get compatible orderings by considering $\omega_r'$ and $\omega_c'$ defined by $\omega_{R_i}':=2\omega_{R_i}$ and $\omega_{C_i}'=2\omega_{C_i}$. 
\end{ex}

Now we are ready to adapt the definition of the Archdeacon embedding, see \cite{A, CPPBiembeddings}, to the case of quasi-Heffter arrays.
\begin{defin}[Archdeacon embedding]\label{ArchdeaconEmbedding}
Let $A$ be an $\Q\H_t(m,n;h,k)$ that admits two compatible orderings $\omega_r$ and $\omega_c$. We consider the permutation $\rho_0$ on
$\pm \E(A)=\Z_{2nk+t}\setminus \frac{2nk+t}{t}\Z_{2nk+t}$, where $\frac{2nk+t}{t}\Z_{2nk+t}$
denotes the subgroup of $\Z_{2nk+t}$ of order $t$, so defined
\begin{eqnarray}\label{ArchEmb}
\rho_0(a)&=&\begin{cases}
-\omega_r(a)\mbox{ if } a\in \E(A),\\
\omega_c(-a)\mbox{ if } a\in -\E(A).\\
\end{cases}
\end{eqnarray}
Now, we define a map $\rho$ on the set of oriented edges of this graph as follows
\begin{eqnarray}\label{ArchRho}
\rho((x,x+a))&=& (x,x+\rho_0(a)).
\end{eqnarray}
\end{defin}
\begin{ex}\label{EsempiArchdeacon}
Considering the array $A_2$ and the orderings $\omega_r$ and $\omega_c$ of Example \ref{EsempiCompatible}, we have that, these orderings define the permutation of $\Z_{21}\setminus\{0,7,14\}$ 
$$\rho_0:=(1,2,-5,-6,3,-1,10,5,9,4,6,-10,8,-9,-2,-3,-4,-8).$$ 
Also, note that, considering the array $A_3=2A_2$ of Example \ref{EsempiQH}, we obtain the Archdeacon embedding defined by the permutation
$$\rho_0':2\rho_0=(2,4,-10,-9,6,-2,-1,10,-3,8,-9,1,-5,3,-4,-6,-8,-5).$$ 
This observation suggests that the Archdeacon embedding deeply depends on the combinatorial structure of the compatible orderings. This link will be developed in Section 4 of this paper.
\end{ex}
\begin{rem}
Note that in Definitions \ref{Compatible} and \ref{ArchdeaconEmbedding} we can interchange the roles of $\omega_r$ and $\omega_c$. This operation corresponds to transposing the array $A$ and hence it does not change the class of maps here considered.
\end{rem}
Reasoning as in \cite{CostaDellaFiorePasotti} and in \cite{CPPBiembeddings} (see also Proposition 5.6 of \cite{PM}), we can prove that, since the orderings $\omega_r$ and $\omega_c$ are compatible, the map $\rho$ is a rotation of $K_{\frac{2nk+t}{t}\times t}$ and we get the following Theorem.

\begin{thm}\label{HeffterBiemb} Let $A$ be a $\Q\H_t(m,n;h,k)$ that admits two compatible orderings $\omega_r$ and $\omega_c$. Then there exists a cellular biembedding $\psi$ of $K_{\frac{2nk+t}{t}\times t}$, such that every edge is on a face whose boundary length is a multiple of $h$ and on a face whose boundary length is a multiple of $k$.

Moreover, setting $v=2nk+t$, $\psi$ is $\Z_v$-regular.
\end{thm}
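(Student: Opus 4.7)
The plan is to verify that the permutation $\rho$ constructed in Definition~\ref{ArchdeaconEmbedding} satisfies the axioms of Definition~\ref{DefEmbeddings}, analyse the resulting face structure, and then check $\Z_v$-regularity. Throughout, set $v=2nk+t$ and identify the vertex set of $K_{\frac{v}{t}\times t}$ with $\Z_v$, the parts being the cosets of the order-$t$ subgroup $J=\frac{v}{t}\Z_v$. Then the neighbours of $x$ are the vertices $x+a$ with $a\in \Z_v\setminus J$, and by property $(b_1)$ one has $\pm\E(A)=\Z_v\setminus J$.

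First, I would check that $\rho$ is a rotation of $K_{\frac{v}{t}\times t}$. Property $(a)$ of Definition~\ref{DefEmbeddings} is immediate because $\rho_0$ preserves $\Z_v\setminus J$. For property $(b)$, one needs $\rho_0$ to be a single $2nk$-cycle on $\Z_v\setminus J$. Here the compatibility hypothesis is crucial: since $\rho_0$ swaps $\E(A)$ with $-\E(A)$, a direct computation from (\ref{ArchEmb}) gives $\rho_0^2(a)=\omega_c(\omega_r(a))$ for every $a\in\E(A)$, so $\rho_0^2|_{\E(A)}=\omega_c\circ\omega_r$, which by assumption is an $nk$-cycle. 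Consequently $\rho_0$ is one $2nk$-cycle on $\Z_v\setminus J$, and via the equivalence recalled after Definition~\ref{DefEmbeddings}, the pair $(K_{\frac{v}{t}\times t},\rho)$ yields a cellular embedding $\psi$ of $K_{\frac{v}{t}\times t}$ into an orientable surface.

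Next, I would analyse the faces. Boundary walks correspond to the orbits of the permutation $(x,y)\mapsto \rho(y,x)$ on oriented edges; by the translation-equivariant form (\ref{ArchRho}), each such orbit is determined by its sequence of edge-differences, which obeys a rule entirely encoded by $\rho_0$. Splitting according to whether the initial difference lies in $\E(A)$ or $-\E(A)$, the sequence of differences can be read off from a single row ordering $\omega_{R_i}$ in one case and from a single column ordering $\omega_{C_j}$ in the other. As reasoned in \cite{CostaDellaFiorePasotti} and \cite{CPPBiembeddings}, this produces two families of faces: \emph{row faces}, whose boundary length is a multiple of $h$ because each $\omega_{R_i}$ is a cycle of length $h$, and \emph{column faces}, whose lengths are multiples of $k$ for the analogous reason. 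Verifying that every oriented edge lies on exactly one row face and exactly one column face yields the $2$-colouring of the face set, so $\psi$ is a biembedding with the claimed divisibility properties.

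Finally, $\Z_v$-regularity is immediate from (\ref{ArchRho}): the translation $x\mapsto x+c$ commutes with $\rho$ for every $c\in\Z_v$ and therefore induces an automorphism of $\psi$. I expect the main technical obstacle to be the face-tracing step, in particular matching each orbit of the face-tracing permutation cleanly with a row or a column of $A$ together with its associated ordering; once this correspondence is set up, the length-divisibility and $2$-colourability reduce to routine bookkeeping from the definitions.
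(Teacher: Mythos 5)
Your proposal is correct and follows essentially the same route as the paper, which itself establishes the theorem by the argument of \cite{CostaDellaFiorePasotti} and \cite{CPPBiembeddings}: compatibility of $\omega_r$ and $\omega_c$ forces $\rho_0$ (which swaps $\E(A)$ and $-\E(A)$ with $\rho_0^2|_{\E(A)}=\omega_c\circ\omega_r$) to be a single $2nk$-cycle, so $\rho$ is a rotation, and the faces split into row-traced and column-traced ones of lengths divisible by $h$ and $k$ respectively, with $\Z_v$-regularity coming from the translation-equivariant form of $\rho$. Your face-tracing sketch matches the paper's description of the faces $F_1$ and $F_2$ in equations (\ref{F1}) and (\ref{F2}), so no essential difference in method.
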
 
We are also interested in describing the faces (and the face lengths) induced by the Archdeacon embedding under the condition of Theorem \ref{HeffterBiemb}.

For this purpose, we take a p.f. array $A$ that is a $\Q\H_t(m,n;h,k)$ and that admits two compatible orderings $\omega_r$ and $\omega_c$. We consider the oriented edge $(x,x+a)$ with $a \in \E(A)$, and let ${C}$ be the column of $A$ containing $a$.
We denote by $\lambda_c$ the minimum positive integer such that $\sum_{i=0}^{\lambda_c|\E({C})|-1} \omega_c^i(a)=0$ in $\mathbb{Z}_{2nk+t}$.
Due to Theorem 6.4 of \cite{CostaDellaFiorePasotti} (see also \cite{PM}) $(x,x+a)$ belongs to the face $F_1$ whose boundary is
\begin{equation}\label{F1}\left(x,x+a,x+a+\omega_c(a),\ldots,x+\sum_{i=0}^{\lambda_c|\E({C})|-2} \omega_c^i(a)\right).\end{equation} We also note that, denoted by $\Sigma C$ the sum of the elements of the column $C$, then $\lambda_c$ is the minimum positive integer such that $\lambda_c(\Sigma C) = 0$ in $\mathbb{Z}_{2nk+t}$ and $F_1$ has length $k\lambda_c$. Moreover, this face covers exactly $\lambda_c$ edges of type $(y,y+a)$ for some $y\in \Z_{2nk+t}$. Since the total number of such edges is $2nk+t$, the column $C$ induces $(2nk+t)/\lambda_c$ different faces.

Let us now consider the oriented edge $(x,x+a)$ with $a\not \in \E(A)$ and let ${R}$ be the row containing $-a$.
In this case, we denote by $\lambda_r$ the minimum positive integer such that $\sum_{i=0}^{\lambda_r|\E({R})|-1} \omega_r^i(-a)=0$ in $\mathbb{Z}_{2nk+t}$ and hence $a=\sum_{i=1}^{\lambda_r|\E({R})|-1} \omega_r^i(-a)$. Then $(x,x+a)$ belongs to the face $F_2$ whose boundary is
\begin{equation}\label{F2}\left(x,x+\sum_{i=1}^{\lambda_r|\E({R})|-1}\omega_{r}^{-i}(-a),x+\sum_{i=1}^{\lambda_r|\E({R})|-2}\omega_{r}^{-i}(-a),
\dots,x+\omega_{r}^{-1}(-a)\right).\end{equation}
Here we have that, denoted by $\Sigma R$ the sum of the elements of the row $R$, then $\lambda_r$ is the minimum positive integer such that $\lambda_r(\Sigma R)=0$ in $\mathbb{Z}_{2nk+t}$ and $F_2$ has length $h\lambda_r$. Moreover, reasoning as for the columns, we have that the row $R$ induces $(2nk+t)/\lambda_r$ different faces.

To describe the spectrum of the face lengths, we need to introduce the following notation. Given a multiset $M$, we denote by $M^\alpha$ the multiset in which each element of $M$ appears $\alpha$ times.
From the above discussion, it follows that (see also \cite{PM}).
\begin{thm}
Let $A$ be a $\Q\H_t(m,n;h,k)$ that admits two compatible orderings $\omega_r$ and $\omega_c$. Then, set $v=2nk+t$, there exists a $\Z_v$-regular, cellular biembedding $\psi$ of $K_{\frac{v}{t}\times t}$ whose face lengths define the following multiset
$$\bigcup_{C\mbox{ is a column of }A}\left [k\lambda_c\right]^{v/\lambda_c}\cup \bigcup_{R\mbox{ is a row of }A} \left[h\lambda_r\right]^{v/\lambda_r}.$$
\end{thm}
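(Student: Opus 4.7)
The plan is to build directly on the analysis already carried out in the two paragraphs preceding the statement, which contain essentially all the geometric content. Existence of the $\Z_v$-regular cellular biembedding $\psi$ of $K_{\frac{v}{t}\times t}$ follows verbatim from Theorem \ref{HeffterBiemb}, so the task reduces to assembling the face-length multiset.

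The first step is to invoke the $\Z_v$-regularity of $\psi$ in order to count faces one edge-orbit at a time. Since the rotation $\rho$ of \eqref{ArchRho} is $\Z_v$-equivariant by construction, the $\Z_v$-action permutes the faces, and every oriented edge of $K_{\frac{v}{t}\times t}$ has the form $(x,x+a)$ with $a\in\pm\E(A)=\Z_v\setminus J$. Hence each face-orbit is labelled by the residue $a$ of a representative edge, and we may split into the two cases $a\in \E(A)$ and $a\in -\E(A)$. In the case $a\in \E(A)$, the element $a$ lies in a unique column $C$; formula \eqref{F1} writes down the entire face through $(0,a)$, which by the definition of $\lambda_c$ closes up after $k\lambda_c$ steps. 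As already noted in the text, the step-values $\omega_c^j(a)$ cycle through $\E(C)$ with period $k$, so this face traverses exactly $\lambda_c$ oriented edges of the form $(y,y+a')$ for every $a'\in\E(C)$. Translating by $\Z_v$ therefore partitions the $kv$ oriented edges indexed by column $C$ into $v/\lambda_c$ faces, all of length $k\lambda_c$. The case $a\in -\E(A)$ is entirely symmetric via \eqref{F2} and produces $v/\lambda_r$ faces of length $h\lambda_r$ for each row $R$.

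Since every $a\in \pm\E(A)$ belongs to a unique column (if $a\in\E(A)$) or a unique row (if $-a\in\E(A)$), the face orbits arising from different rows and columns are disjoint and collectively exhaust the face set of $\psi$. Taking the disjoint union produces the multiset in the statement. The one bookkeeping point worth checking is that the column-faces and row-faces really are disjoint families; this is built into the biembedding structure from Theorem \ref{HeffterBiemb}, where the two colors correspond precisely to the edge-orbits $a\in \E(A)$ and $a\in -\E(A)$. I do not foresee a serious obstacle: the proof is essentially a counting argument stacked on top of \eqref{F1} and \eqref{F2}, and the main thing to be careful about is using $\Z_v$-regularity to convert the single-face formulas into orbit counts without over- or under-counting.
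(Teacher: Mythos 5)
Your proposal is correct and follows essentially the same route as the paper: the paper itself derives this theorem directly from the preceding discussion of the faces \eqref{F1} and \eqref{F2}, counting that each column $C$ induces $v/\lambda_c$ faces of length $k\lambda_c$ and each row $R$ induces $v/\lambda_r$ faces of length $h\lambda_r$, with existence and $\Z_v$-regularity supplied by Theorem \ref{HeffterBiemb}. Your orbit-counting phrasing is just a slightly more explicit version of the same argument.
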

\begin{ex}\label{EsempiFacce}
We describe here the faces of the Archdeacon embedding obtained in Example \ref{EsempiArchdeacon}. In this case, the vector of the row sums is $(2,11,13)$ and the vector of the column sums is $(19, 2, 5)$. Since all the row sums and the column sums are invertible in $\Z_{21}$, we get a $\Z_{21}$-regular biembedding $K_{7\times 3}$ whose faces have all length $63$ (i.e., a $63$-gonal embedding).

We explicitly write here the face obtained by starting from the oriented edge $(0,1)$ and proceeding through the column $C_1$.
$$F_1:=(0, 1, -10, -2, -1, 9, -4, -3, 7, -6, -5, 5, -8, -7, 3, -10, -9, 1, 9, 10,$$
$$ -1, 7, 8, -3, 5, 6, -5, 3, 4, -7, 1, 2, -9, -1, 0, 10, -3, -2, 8, -5, -4, 6, -7,$$ $$ -6, 4, -9, -8, 2, 10, -10, 0, 8, 9, -2, 6, 7, -4, 4, 5, -6, 2, 3, -8).$$
\end{ex}

\section{On the Automorphism group}
This section aims to study the full automorphism group (i.e. the group of all its automorphisms) of an embedding of Archdeacon type.
We first recall that also the notions of embedding isomorphism and automorphism can be defined purely combinatorially as follows (see Korzhik and Voss \cite{Korzhik} page 61).
\begin{defin}\label{DefEmbeddingsIs}
Let $\Pi:= (\Gamma,\rho)$ and $\Pi':= (\Gamma',\rho')$ be two combinatorial embeddings of, respectively, $\Gamma$ and $\Gamma'$. We say that $\Pi$ is \emph{isomorphic} to $\Pi'$ if there exists a graph isomorphism $\sigma: \Gamma\rightarrow \Gamma'$ such that, for any $(x,y)\in D(\Gamma)$, we have either
\begin{equation}\label{eq11}
\sigma\circ \rho(x,y)=\rho'\circ \sigma(x,y)
\end{equation}
or
\begin{equation}\label{eq12}
\sigma\circ \rho(x,y)=(\rho')^{-1}\circ \sigma(x,y).
\end{equation}
We also say, with abuse of notation, that $\sigma$ is an \emph{embedding isomorphism} between $\Pi$ and $\Pi'$.
Moreover, if equation (\ref{eq11}) holds, $\sigma$ is said to be an \emph{orientation preserving isomorphism} while,
if (\ref{eq12}) holds, $\sigma$ is said to be an \emph{orientation reversing isomorphism}.
\end{defin}
\begin{ex}\label{DinitzTranslation}
We consider again the embedding $(K_7,\rho)$ represented by Figure \ref{fig1}. Due to the step $(3)$ of its definition (see Example \ref{DinitzFig}), $\rho$ satisfies the following property
\begin{equation}\label{property3}\rho((z+x,z+y))=(z+x,z+\rho_x(y)).\end{equation}
We denote by $\tau_z: \Z_7\rightarrow \Z_7$ the map $\tau_z(x):=z+x$, and, by abuse of notation, we set  $\tau_z(x,y):=(z+x,z+y)$. Then, property \refeq{property3} can be written as
$$\tau_z\circ \rho(x,y)=\rho\circ \tau_z(x,y).$$
This means that, for any $z\in \Z_7$, the map $\tau_z$ is an orientation preserving automorphism of $(K_7,\rho)$.
\end{ex}
\begin{ex}\label{Multiplication}
Let $\Pi:= (K_{7\times 3},\rho)$ and $\Pi':= (K_{7\times 3},\rho')$ be the Archdeacon embeddings defined in Example \ref{EsempiArchdeacon} starting from, respectively the arrays $A_2$ and $A_3$.
We denote by $\mu_2$ the map $ \mu_2(x)=2x$ of $\Z_{21}$ which can be seen as a graph automorphism of $K_{7\times 3}$ if we ideintify its vertices with $\Z_{21}$.

Then, for any oriented edge $(x,x+a)\in D(K_{7\times 3})$, we have that
$$\mu_2\circ \rho(x,x+a)= \mu_2(x,x+\rho_0(a))=(2x,2x+2\rho_0(a)).$$
Since $\rho_0'(2a)=2\rho_0(a)$, this can be rewritten as
$$(2x,2x+\rho_0'(2a))=\rho'(2x,2x+2a)=\rho'\circ \mu_2(x,x+a)$$
implying that $\mu_2$ is an orientation preserving automorphism between the Aerchdeacon embeddings $\Pi$ and $\Pi'$.
\end{ex}
Here, using the notations of \cite{CostaPasotti2}, given an embedding $\Pi$, we denote by $Aut(\Pi)$ the group of all automorphisms of $\Pi$ and by $Aut^+(\Pi)$ the group of the orientation preserving automorphisms. Similarly, we denote by $Aut_0(\Pi)$ the subgroup of $Aut(\Pi)$ of the automorphisms that fix $0$ and by $Aut_0^+(\Pi)$ the group of the orientation preserving automorphisms that fix $0$. We remark that, since an orientable surface admits exactly two orientations, $Aut^+(\Pi)$ (resp. $Aut_0^+(\Pi)$) is a normal subgroup of $Aut(\Pi)$ (resp. $Aut_0(\Pi)$) whose index is either $1$ or $2$.
Also, as done in Example \ref{DinitzTranslation}, we denote by $\tau_g$ the translation by $g$, i.e. the map $V(\Gamma)=\mathbb{Z}_v\rightarrow V(\Gamma)=\mathbb{Z}_v$ such that $\tau_g(x)=x+g$.
Then, when we consider a $\mathbb{Z}_v$-regular embedding $\Pi$ of $\Gamma$, we identify the vertex set of $\Gamma$ with $\mathbb{Z}_v$ and we assume that the translation action is regular.
Applying this convention, we have that $\tau_g\in Aut(\Pi)$ for any $g\in \mathbb{Z}_v$. Moreover, in the case of the Archdeacon embedding, recalling equation (\ref{ArchRho}), the translations also belong to $Aut^+(\Pi)$.

Now we want to characterize some properties of the elements of $Aut_0(\Pi)$ inspired by the work of Korzhik and Voss \cite{Korzhik}. Indeed, in that paper, the authors provided a very nice characterization of the automorphisms of a $\mathbb{Z}_v$-regular embedding of a complete graph $K_v$. More precisely, in the proof of their Theorem 1, they implicitly proved the following theorem.
\begin{thm}[\cite{Korzhik}]\label{Korzhik1}
Let $\Pi$ be a $\mathbb{Z}_v$-regular embedding of $K_v$ and let us assume that all the translations belong to $Aut^+(\Pi)$. Then a permutation $\sigma$ of $\mathbb{Z}_v\setminus\{0\}$ belongs to $Aut_0(\Pi)$ if and only if
\begin{itemize}
\item[a)] Set $\rho_0=(x_1,x_2,\dots,x_{v-1})$ we have that either $$(x_1,x_2,\dots,x_{v-1})=(\sigma(x_1),\sigma(x_2),\dots,\sigma(x_{v-1}))$$ or $$(x_1,x_2,\dots,x_{v-1})=(\sigma(x_{v-1}),\dots,\sigma(x_2),\sigma(x_{1}))$$
where the equalities are considered as cycle equality.
\item[b)] The permutation $\sigma$ is an automorphism of the additive group $\mathbb{Z}_v$.
\end{itemize}
\end{thm}
Here we deal with the more complicated case of multipartite graphs $K_{m\times t}$ (where $m=v/t$) but Propositions \ref{CondizioneLocale1} and \ref{CondizioneLocale2} below adapt condition $(a)$ of Theorem \ref{Korzhik1} and Proposition \ref{CondizioniGlobale}, even though it is much weaker, should be read in the same spirit of Theorem \ref{Korzhik1} $(b)$.

First, we deal with the elements of $Aut_0^+$: the following proposition was already implicity proved in the proof of Proposition 3.3 of \cite{CostaPasotti2} but, for the sake of completeness, we also write its proof here.
\begin{prop}\label{CondizioneLocale1}
Let $\Pi$ be a $\mathbb{Z}_v$-regular embedding of $K_{m\times t}$ (where $m=v/t$) and let us assume that all the translations belong to $Aut^+(\Pi)$.
Then, given $\sigma\in Aut_0^+(\Pi)$, the following condition holds
$$\sigma|_{N(K_{m\times t},0)}=\rho_0^\ell\mbox{ for some }1\leq\ell\leq (m-1)t-1.$$
Moreover, given $\sigma_1, \sigma_2\in Aut_0^+(\Pi)$ such that $\sigma_1|_{N(K_{m\times t},0)}=\sigma_2|_{N(K_{m\times t},0)}$, then $\sigma_1=\sigma_2$.
\end{prop}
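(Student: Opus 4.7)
The plan is to exploit the commutation relation that the definition of an orientation-preserving automorphism forces between $\sigma$ and the rotation $\rho$. Since $\sigma\in Aut_0^+(\Pi)$, equation (\ref{eq11}) gives $\sigma\circ\rho=\rho\circ\sigma$. Applied to oriented edges of the form $(0,y)$ with $y\in N(K_{m\times t},0)$, using $\sigma(0)=0$ and the identity $\rho(0,y)=(0,\rho_0(y))$, this restricts to
\[
\sigma\circ\rho_0=\rho_0\circ\sigma
\]
as permutations of $N(K_{m\times t},0)$. This single identity will do essentially all the work.

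For the first statement, recall that by condition $(b)$ of Definition \ref{DefEmbeddings} the local rotation $\rho_0$ is a single cycle of length $|N(K_{m\times t},0)|=(m-1)t$ on that neighborhood. A classical fact in symmetric group theory says that the centralizer of a full $n$-cycle in $\Sym_n$ is exactly the cyclic subgroup generated by that cycle. Applying this to the displayed commutation, $\sigma|_{N(K_{m\times t},0)}$ must be a power of $\rho_0$, and the claim on the range of the exponent follows by reducing modulo $(m-1)t$.

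For the uniqueness assertion, I would set $\sigma':=\sigma_1\sigma_2^{-1}\in Aut_0^+(\Pi)$. By hypothesis $\sigma'$ fixes $0$ together with every element of $N(K_{m\times t},0)$. The strategy is to propagate this fixing throughout the vertex set. For any $y\in N(K_{m\times t},0)$, the same commutation argument performed at $y$ gives $\sigma'\circ\rho_y=\rho_y\circ\sigma'$ on $N(K_{m\times t},y)$. Since $0\in N(K_{m\times t},y)$ is fixed by $\sigma'$ and $\rho_y$ is a full cycle on $N(K_{m\times t},y)$, the only power of $\rho_y$ with a fixed point is the identity, so $\sigma'$ must restrict to the identity on $N(K_{m\times t},y)$ as well. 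Iterating along paths from $0$ and invoking the connectedness of $K_{m\times t}$, one extends the set of vertices fixed by $\sigma'$ to all of $V(K_{m\times t})$, whence $\sigma'=\mathrm{id}$ and $\sigma_1=\sigma_2$.

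I do not expect any serious obstacle: the proof is a rigidity argument where, at each stage, an orientation-preserving automorphism commuting with a full local cycle is pinned down on the corresponding neighborhood by a single fixed point, and the propagation terminates because $K_{m\times t}$ is connected. The only ingredient that is not entirely formal is the structure of the centralizer of a long cycle in the symmetric group, which is a standard textbook result.
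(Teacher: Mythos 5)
Your proposal is correct and follows essentially the same route as the paper: the commutation $\sigma\circ\rho_0=\rho_0\circ\sigma$ on $N(K_{m\times t},0)$ forces $\sigma|_{N(K_{m\times t},0)}$ to be a power of $\rho_0$ (the paper proves the centralizer fact by a short inline induction rather than citing it), and uniqueness is obtained by the same rigidity propagation, where a fixed vertex in an adjacent neighborhood pins the restriction there to the identity. The only cosmetic difference is that you iterate via connectedness, while the paper stops after one step by observing $V(K_{m\times t})=N(K_{m\times t},0)\cup N(K_{m\times t},x)$ for $m\geq 2$.
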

\begin{proof}
Because of the definition, $\sigma \in Aut_0^+(\Pi)$ implies that, for any $x\in N(K_{m\times t},0)$
$$\sigma\circ \rho(0,x)=\rho\circ \sigma(0,x).$$
Recalling that $\rho(0,x)=(0,\rho_0(x))$ for a suitable map $\rho_0: N(K_{m\times t},0)\rightarrow N(K_{m\times t},0)$, we have that
\begin{equation}\label{eqa2}(0,\sigma\circ \rho_0(x))=\sigma\circ \rho(0,x)=\rho\circ \sigma(0,x)=(0,\rho_0\circ \sigma(x)).\end{equation}
Since $|N(K_{m\times t},0)|=(m-1)t$, we can write $\rho_0$ as the cycle $(x_1,x_2,x_3,\dots,x_{(m-1)t})$.
Then, setting $\sigma(x_1)=x_i$, equation (\refeq{eqa2}) implies that
$$(0,\sigma(x_2))=(0,\sigma\circ \rho_0(x_1))=(0,\rho_0\circ \sigma(x_1))=(0,\rho_0(x_i))=(0,x_{i+1}).$$
Therefore, we can prove, inductively, that
$$\sigma(x_j)=x_{j+i-1}$$
where the indices are considered modulo $(m-1)t$.
This means that $\sigma|_{N(K_{m\times t},0)}=\rho_0^{i-1}$ and that $\sigma$ is fixed in $N(K_{m\times t},0)$ when the image of one element is given.

Now we need to prove that, if two automorphisms $\sigma_1$ and $\sigma_2$ of $Aut_0^+(\Pi)$ coincide in $N(K_{m\times t},0)$, they coincide everywhere. Set $\sigma_{1,2}=\sigma_2^{-1}\circ \sigma_1$, this is equivalently to proving that $\sigma_{1,2}$ is the identity. Given $x\in N(K_{m\times t},0)$ we have that $\sigma_{1,2}(x)=x$ and hence $\sigma_{1,2}$ belongs to the subgroup $Aut_x^+(\Pi)$ of $Aut^+(\Pi)$ of the elements that fix $x$. Proceeding with the elements of $Aut_0^+$, we prove that $\sigma_{1,2}|_{N(K_{m\times t},x)}$ is fixed when the image of one element is given.
But now we note that $0\in N(K_{m\times t},x)$ and we have that $\sigma_{1,2}(0)=0$. It follows that
$$\sigma_{1,2}|_{N(K_{m\times t},x)}=id.$$
Since $\sigma_1$ and $\sigma_2$ coincide in $N(K_{m\times t},0)$, we also have that
$$\sigma_{1,2}|_{N(K_{m\times t},0)}=id.$$
Now the thesis follows because, for $m\geq 2$,
$$V(K_{m\times t})=N(K_{m\times t},0)\cup N(K_{m\times t},x).$$\end{proof}
\begin{ex}
We consider again the embedding $\Pi=(K_7,\rho)$ defined in Example \ref{DinitzFig} (see also Figure \ref{fig1}).
Here we want to determine the group $Aut_0^+(\Pi)$. We recall that $\rho_0$ is the cyclic permutation of $\{1,2,3,4,5,6\}$ defined by $(4,6,2,3,1,5)$.
We want to prove that the map $\sigma: \Z_7 \rightarrow \Z_7$ is an element of $Aut_0^+(\Pi)$ where
$$\sigma(x):=\begin{cases} 0 \mbox{ if } x=0,\\
\rho_0(x) \mbox{ otherwise.}\end{cases}$$
In this example it is more convenient to use the topological definition of graph embedding: we need to verify that $\sigma$ maps faces into faces.
Since this map rotates clockwise the neighbors of $0$ and the faces are triangles, it maps faces through zero into faces through zero. It is left to prove that it maps also the family of the faces that do not contain zero, which we denote by $\mathcal{F}_{nz}$, into itself. Here we have that
$$\mathcal{F}_{nz}:=\{\{6,4,3\}, \{6,5,3\}, \{4,2,1\}, \{4,3,1\}, \{4,5,2\}, \{5,3,2\}, \{5,6,1\}, \{6,1,2\}\}.$$
Applying $\sigma$ we have
$$\sigma(\mathcal{F}_{nz})=\{\{2,6,1\}, \{2,4,1\}, \{6,3,5\}, \{6,1,5\}, \{6,4,3\}, \{4,1,3\}, \{4,2,5\}, \{2,5,3\}\}.$$
Since this set is still $\mathcal{F}_{nz}$, we have that $\sigma$ is an element of $Aut_0(\Pi)$. Moreover, since it is a rotation around $0$ it does not change the orientation and so $\sigma\in Aut_0^+(\Pi)$.
But then we also have that, for any $\ell \in \{1,2,\dots, 6\}$, $\sigma^{\ell}\in Aut_0^+(\Pi)$. Finally, because of Proposition \ref{CondizioneLocale1}, we obtain that
$$Aut_0^+(\Pi)=\{\sigma^{\ell}:\ \ell \in \{1,2,\dots, 6\}\}.$$
\end{ex}
Now we prove a characterization, similar to that of Proposition \ref{CondizioneLocale1}, also for the elements of $Aut_0^-(\Pi):=Aut_0(\Pi)\setminus Aut_0^+(\Pi)$.
\begin{prop}\label{CondizioneLocale2}
Let $\Pi$ be a $\mathbb{Z}_v$-regular embedding of $K_{m\times t}$ (where $m=v/t$) and let us assume that all the translations belong to $Aut^+(\Pi)$.
Then, given $\sigma\in Aut_0^-(\Pi)$, set $\rho_0=(x_1,x_2,\dots,x_{(m-1)t})$, and reading the indices modulo $(m-1)t$, the following condition holds
$$\sigma(x_j)=x_{\ell-j}\mbox{ for some }\ell \in \{1,\dots,(m-1)t\}.$$
Moreover, given $\sigma_1, \sigma_2\in Aut_0^-(\Pi)$ such that $\sigma_1|_{N(K_{m\times t},0)}=\sigma_2|_{N(K_{m\times t},0)}$, then $\sigma_1=\sigma_2$.
\end{prop}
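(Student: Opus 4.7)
The plan is to mirror the proof of Proposition \ref{CondizioneLocale1} almost verbatim, taking care of the single change produced by orientation reversal, namely replacing the relation $\sigma\circ\rho=\rho\circ\sigma$ by $\sigma\circ\rho=\rho^{-1}\circ\sigma$. First, I would start from $\sigma\in Aut_0^-(\Pi)$, pick any $x\in N(K_{m\times t},0)$, and evaluate the defining relation at the oriented edge $(0,x)$. Since $\sigma$ fixes $0$ and since $\rho(0,x)=(0,\rho_0(x))$, this produces
\[(0,\sigma\circ\rho_0(x))=\sigma\circ\rho(0,x)=\rho^{-1}\circ\sigma(0,x)=(0,\rho_0^{-1}\circ\sigma(x)),\]
and hence the local identity $\sigma\circ\rho_0=\rho_0^{-1}\circ\sigma$ on $N(K_{m\times t},0)$.

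Next, writing $\rho_0=(x_1,x_2,\dots,x_{(m-1)t})$ and setting $\sigma(x_1)=x_i$, the local identity gives $\sigma(x_2)=\rho_0^{-1}(\sigma(x_1))=\rho_0^{-1}(x_i)=x_{i-1}$, and an immediate induction yields $\sigma(x_j)=x_{i-j+1}$ for every $j$, with indices read modulo $(m-1)t$. Putting $\ell=i+1$ then gives exactly the formula $\sigma(x_j)=x_{\ell-j}$ claimed in the statement. The only conceptual point to notice here is that conjugation by $\sigma$ inverts the rotation rather than preserving it, and this is precisely why the cyclic action on the neighborhood gets reflected instead of translated.

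For the uniqueness half, I would reduce directly to Proposition \ref{CondizioneLocale1} rather than repeating its argument. Given $\sigma_1,\sigma_2\in Aut_0^-(\Pi)$ that coincide on $N(K_{m\times t},0)$, the composition $\sigma_{1,2}:=\sigma_2^{-1}\circ\sigma_1$ is a product of two orientation-reversing automorphisms and therefore lies in $Aut^+(\Pi)$; it fixes $0$, so $\sigma_{1,2}\in Aut_0^+(\Pi)$; and it satisfies $\sigma_{1,2}|_{N(K_{m\times t},0)}=id$. Since the identity is also in $Aut_0^+(\Pi)$ and agrees with $\sigma_{1,2}$ on the neighborhood of $0$, the uniqueness clause of Proposition \ref{CondizioneLocale1} forces $\sigma_{1,2}=id$, i.e.\ $\sigma_1=\sigma_2$.

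There is no real obstacle here: the argument is a structural mirror of Proposition \ref{CondizioneLocale1}, the only point requiring mild attention being the correct bookkeeping of signs in the step $\sigma\rho=\rho^{-1}\sigma$, which turns a cyclic shift of the indices into an index reflection, thereby producing the form $x_j\mapsto x_{\ell-j}$ rather than $x_j\mapsto x_{\ell+j}$.
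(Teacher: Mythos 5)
Your proposal is correct and follows essentially the same route as the paper: the same local computation $\sigma\circ\rho_0=\rho_0^{-1}\circ\sigma$ yielding the reflected indices $\sigma(x_j)=x_{(i+1)-j}$, and the same reduction of uniqueness to the orientation-preserving case via $\sigma_2^{-1}\circ\sigma_1\in Aut_0^+(\Pi)$. The only cosmetic difference is that you invoke the uniqueness clause of Proposition \ref{CondizioneLocale1} directly (comparing $\sigma_2^{-1}\circ\sigma_1$ with the identity), whereas the paper re-runs that proposition's argument; both are equally valid.
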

\begin{proof}
Because of the definition, $\sigma \in Aut_0^-(\Pi)$ implies that, for any $x\in N(K_{m\times t},0)$,
$$\sigma\circ \rho(0,x)=\rho^{-1}\circ \sigma(0,x).$$
Recalling that $\rho(0,x)=(0,\rho_0(x))$ for a suitable map $\rho_0: N(K_{m\times t},0)\rightarrow N(K_{m\times t},0)$, we have that
\begin{equation}\label{eq2}(0,\sigma\circ \rho_0(x))=\sigma\circ \rho(0,x)=\rho^{-1}\circ \sigma(0,x)=(0,\rho_0^{-1}\circ \sigma(x)).\end{equation}
Since $|N(K_{m\times t},0)|=(m-1)t$, we can write $\rho_0$ as the cycle $(x_1,x_2,x_3,\dots,x_{(m-1)t})$.
Then, setting $\sigma(x_1)=x_i$, equation (\refeq{eq2}) implies that
$$(0,\sigma(x_2))=(0,\sigma\circ \rho_0(x_1))=(0,\rho_0^{-1}\circ \sigma(x_1))=(0,\rho_0^{-1}(x_i))=(0,x_{i-1}).$$
Therefore, we can prove, inductively, that
$$\sigma(x_j)=x_{(i+1)-j}$$
where the indices are considered modulo $(m-1)t$.

Finally, proceeding as in the proof of Proposition \ref{CondizioneLocale1}, we obtain that, if two automorphisms $\sigma_1$ and $\sigma_2$ of $Aut_0^-(\Pi)$ coincide in $N(K_{m\times t},0)$, they coincide everywhere. Here, in order to apply the arguments of the proof of Proposition \ref{CondizioneLocale1}, it suffices to note that, because of equation \refeq{eq12}, $\sigma^{-2}\circ\sigma_1\in Aut_0^+(\Pi)$\end{proof}

\begin{prop}\label{CondizioniGlobale}
Let $A$ be a $\Q\H_t(m,n;h,k)$ that admits two compatible orderings $\omega_r$ and $\omega_c$.
Let $\Pi$ be the Archdeacon embedding of $K_{\frac{2nk+t}{t}\times t}$ defined by $A$ (see Definition \ref{ArchdeaconEmbedding}).
Then, given $\sigma\in Aut_0(\Pi)$ and $x\in \E(A)$, the following condition holds
\begin{equation}\label{CondizioneForte}\sigma(x+\omega_c(x))-\sigma(x)\in \begin{cases}
\{\omega_c(\sigma(x)),\omega_r^{-1}(\sigma(x))\}\mbox{ if }\sigma(x)\in \E(A),\\
\{-\omega_c^{-1}(-\sigma(x)),-\omega_r(-\sigma(x))\}\mbox{ if }\sigma(x)\in -\E(A).\end{cases}\end{equation}
In particular if we set, by convention, $\omega_r(y)=-\omega_r(-y)$ and $\omega_c(y)=-\omega_c(-y)$ when $y\in -\E(A)$, also the following, weaker, condition is realized
\begin{equation}\label{CondizioneDebole}\sigma(x+\omega_c(x))-\sigma(x)\in \{\omega_c(\sigma(x)),\omega_r^{-1}(\sigma(x)),-\omega_c^{-1}(-\sigma(x)),-\omega_r(-\sigma(x))\}\end{equation}
\end{prop}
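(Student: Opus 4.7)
The plan is face-tracing. Since $\sigma\in Aut_0(\Pi)$ fixes the vertex $0$, it permutes the two faces of $\Pi$ incident to the unoriented edge $\{0,x\}$, carrying them to the two faces incident to $\{0,\sigma(x)\}$; whether this happens with or without reversing the orientation of the face boundaries is determined by \eqref{eq11} vs.\ \eqref{eq12} of Definition~\ref{DefEmbeddingsIs}. Using \eqref{ArchRho} and the explicit formula \eqref{ArchEmb}, I identify both faces through $\{0,x\}$: the face traced forward from $(0,x)$ has next oriented edge $\rho(x,0)=(x,x+\rho_0(-x))$ and, since $-x\in -\E(A)$, $\rho_0(-x)=\omega_c(x)$, so this face begins $(0,x,x+\omega_c(x),\dots)$, in accord with \eqref{F1}; the face traced forward from $(x,0)$ has next oriented edge $\rho(0,x)=(0,\rho_0(x))=(0,-\omega_r(x))$, so it begins $(x,0,-\omega_r(x),\dots)$.

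The same computation, performed at $\sigma(x)$, gives the two faces through $\{0,\sigma(x)\}$. If $\sigma(x)\in\E(A)$, they begin $(0,\sigma(x),\sigma(x)+\omega_c(\sigma(x)),\dots)$ and $(\sigma(x),0,-\omega_r(\sigma(x)),\dots)$; if $\sigma(x)\in -\E(A)$, setting $y=\sigma(x)$ and using $\rho_0(-y)=-\omega_r(-y)$ and $\rho_0(y)=\omega_c(-y)$, they begin $(0,y,y-\omega_r(-y),\dots)$ and $(y,0,\omega_c(-y),\dots)$. Now the image under $\sigma$ of the face traced forward from $(0,x)$ is a face of $\Pi$ containing the oriented edge $(0,\sigma(x))$. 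When $\sigma$ is orientation-preserving, by \eqref{eq11} this image is itself a forward-traced face and must coincide with the face traced forward from $(0,\sigma(x))$; comparing the third vertex with $\sigma(x+\omega_c(x))$ gives $\sigma(x+\omega_c(x))-\sigma(x)=\omega_c(\sigma(x))$ or $-\omega_r(-\sigma(x))$ according to the sign case. When $\sigma$ is orientation-reversing, the image cyclic vertex sequence is the reverse of a forward-traced face; the only forward-traced face whose reverse starts $(0,\sigma(x),\dots)$ is the one from $(\sigma(x),0)$, and the predecessor of $\sigma(x)$ in it equals $\rho_{\sigma(x)}^{-1}(0)=\sigma(x)+\rho_0^{-1}(-\sigma(x))$. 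Solving $\rho_0(a)=-\sigma(x)$ in each sign case then yields $\sigma(x+\omega_c(x))-\sigma(x)=\omega_r^{-1}(\sigma(x))$ or $-\omega_c^{-1}(-\sigma(x))$. Collecting the four situations produces exactly \eqref{CondizioneForte}, and \eqref{CondizioneDebole} follows at once as the union once the conventions on $-\E(A)$ are adopted.

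The main obstacle is the orientation-reversing case: since \eqref{eq12} intertwines $\sigma$ with $\rho^{-1}$ rather than with $\rho$, the image under $\sigma$ of the cyclic vertex sequence of the face from $(0,x)$ is not itself a forward-traced face of $\Pi$, but the reverse of one. Recognising this and then computing the vertex immediately preceding $\sigma(x)$ in the face from $(\sigma(x),0)$ via $\rho_0^{-1}$ is the conceptual step; once it is in place, the rest reduces to routine bookkeeping of two orientations $\times$ two sign subcases, each handled by a direct application of \eqref{ArchEmb}.
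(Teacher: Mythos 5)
Your proof is correct and follows essentially the same route as the paper: apply $\sigma$ to the face \eqref{F1} through $(0,x)$ and compare with the two faces through the oriented edge $(0,\sigma(x))$, whose possible ``third vertices'' you compute directly from \eqref{ArchEmb} (the paper reads them off from \eqref{F1}--\eqref{F2}). Your orientation-preserving/reversing case split is only a mild refinement: it additionally pins down which of the two admissible values of $\sigma(x+\omega_c(x))-\sigma(x)$ in \eqref{CondizioneForte} occurs in each case, whereas the paper just concludes membership in the two-element set.
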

\begin{proof}
Given $x\in \E(A)$, one of the two faces that contain the edge $(0,x)$ is, according to equation \refeq{F1}, of the form
$$F_1:=(0,x,x+\omega_c(x),x+\omega_c(x)+\omega_c^2(x),\ldots,-\omega_c^{-1}(x)).$$
Since $\sigma\in Aut_0(\Pi)$, it maps faces onto faces.
Therefore the face $\sigma(F_1)$, that contains the edge $(0,\sigma(x))$, is of the form
$$\sigma(F_1)=(0,\sigma(x),\sigma(x+\omega_c(x)),\sigma(x+\omega_c(x)+\omega_c^2(x)),\ldots,\sigma(-\omega_c^{-1}(x))). $$
We observe that, due to equations \refeq{F1} and \refeq{F2}, the oriented edge $(0,z)$ belongs to exactly two faces among the following ones:
\begin{itemize}
\item[1)] $(0,z,z+\omega_c(z),z+\omega_c(z)+\omega_c^2(z),\ldots,-\omega_c^{-1}(z))$, this face is obtained setting $x=0$ and $a=z$ in equation \refeq{F1},
\item[2)] $(0,z,z+\omega_r^{-1}(z),\ldots,-\omega_r^2(z)-\omega_r(z),-\omega_r(z))$, this face is obtained setting $x=z$ and $a=-z$ in equation \refeq{F2},
\item[3)] $(0,z,z-\omega_c^{-1}(-z),\ldots,\omega_c(-z)+\omega_c^2(-z),\omega_c(-z))$, this face is obtained setting $x=z$ and $a=-z$ in equation \refeq{F1},
\item[4)] $(0,z,z-\omega_r(-z),z-\omega_r(-z)-\omega_r^{2}(-z),\ldots,\omega_r^{-1}(-z))$, this face is obtained setting $x=0$ and $a=z$ in equation \refeq{F2},
\end{itemize}
according to whether $z\in \E(A)$ (cases $1$ and $2$) or $z\in -\E(A)$ (cases $3$ and $4$).
It follows that the difference $\sigma(x+\omega_c(x))-\sigma(x)$ belongs to the set $$\{\omega_c(\sigma(x)),\omega_r^{-1}(\sigma(x))\}\mbox{ if }\sigma(x)\in \E(A)$$ and to the set
$$\{-\omega_c^{-1}(-\sigma(x)),-\omega_r(-\sigma(x))\}\mbox{ if }\sigma(x)\in -\E(A).$$
\end{proof}
Given a p.f. array $A$, by $skel(A)$ we denote the \emph{skeleton} of $A$, that is the set of the filled positions of $A$.
Similarly, given the orderings $\omega_r$ and $\omega_c$ for the rows and the columns of $A$, we denote with $\alpha_r$ and $\alpha_c$ the induced permutations on the skeleton. We remark that $\omega_r$ and $\omega_c$ are compatible if and only if $\alpha_r\circ\alpha_c$ is a cycle of order $|\E(A)|$. In this case, we say, with abuse of notation, that $\alpha_r$ and $\alpha_c$ are compatible orderings of $skel(A)$. If the elements of $\pm \E(A)$ are all distinct, given $x\in \pm \E(A)$ we define its position $p(x)$ as the cell $(i,j)$ such that $a_{i,j}=\pm x$; we also set, by convention, $p(x)$ to be $\infty$ whenever $x\not \in\pm \E(A)$. Using this notation we have that $\alpha_r\circ p= p\circ \omega_r$ and $\alpha_c\circ p= p\circ \omega_c$.

Now we want to estimate the probability that there exists an automorphism $\sigma$ that satisfies the conditions of Propositions \ref{CondizioneLocale1} and \ref{CondizioniGlobale} (or \ref{CondizioneLocale2} and \ref{CondizioniGlobale} when $\sigma$ is an orientation reversing automorphism) for a given value of $\ell$. Since this computation is quite technical, we split it into the following three lemmas according to whether $\sigma$ is an orientation reversing or preserving automorphism and, in the latter case, according to the value of $\ell$. Moreover, since we are more interested in the asymptotic behavior of those estimations and since in our applications (see Sections $4$ and $5$) we always have that $h,k\geq3$, we consider here $\Q\H_t(m,n;h,k)$ such that $h,k\geq3$.
\begin{lem}\label{ProbDirect}
Let $B$ be a set of cells of an $m\times n$ array that contains exactly $h\geq 3$ cells in each row and exactly $k\geq 3$ cells in each column and let us assume it admits two compatible orderings $\alpha_r$ and $\alpha_c$. Given $\Omega\subset \Z_{2nk+t}$ such that $[\pm x \mid x \in \Omega]$ contains each element of $\Z_{2nk+t}\setminus \frac{(2nk+t)}{t}\Z_{2nk+t}$ exactly once, we consider the set $\mathcal{A}$ of all the $\Q\H_t(m,n;h,k)$, $A$, such that:
\begin{itemize}
\item[a)] $skel(A)=B,$
\item[b)] $\E(A)=\Omega.$
\end{itemize}
Let us choose, uniformly at random, $A\in \mathcal{A}$ and let us denote by $\omega_r$ and $\omega_c$ the compatible orderings of $A$ that correspond to $\alpha_r$ and $\alpha_c$, by $\Pi$ the Archdeacon embedding of $K_{\frac{2nk+t}{t}\times t}$ defined by $A$ (see Definition \ref{ArchdeaconEmbedding}), and by $\rho$ the corresponding rotation.
Let us also, fix $\ell\in \{1,\dots,2nk-1\}$ such that $\ell\not=nk$ when $nk$ is odd.

Then the probability that there exists $\sigma \in Aut_0^+(\Pi)$ such that $\sigma|_{N(K_{\frac{2nk+t}{t}\times t},0)}=\rho_0^{\ell}$, for a fixed $t$ is $O\left(\frac{1}{(nk)^2}\right)$.
\end{lem}
\begin{proof}
We note that, if there exists an automorphism $\sigma$ whose restriction to $N(K_{\frac{2nk+t}{t}\times t},0)$ is $\rho_0^{\ell}$, then $\sigma^2|_{N(K_{\frac{2nk+t}{t}\times t},0)}=\rho_0^{2\ell}$. Hence we have that
$$\mathbb{P}(\exists\ \sigma\in Aut_0^+(\Pi): \sigma|_{N(K_{\frac{2nk+t}{t}\times t},0)}=\rho_0^{\ell})\leq\mathbb{P}(\exists\ \sigma\in Aut_0^+(\Pi): \sigma|_{N(K_{\frac{2nk+t}{t}\times t},0)}=\rho_0^{2\ell}).$$
Therefore, in the following, we can assume that $\ell$ is an even integer in $\{1,\dots,2nk-1\}$ and that there exists an automorphism $\sigma$ whose restriction to $N(K_{\frac{2nk+t}{t}\times t},0)$ is $\rho_0^{\ell}$. Then, according to condition \refeq{CondizioneDebole} of Proposition \ref{CondizioniGlobale}, we must have that
\begin{equation}\label{Condizione}
\sigma(x+\omega_c(x))-\rho_0^{\ell}(x)\in \{\omega_c(\rho_0^{\ell}(x)),\omega_r^{-1}(\rho_0^{\ell}(x)),-\omega_c^{-1}(-\rho_0^{\ell}(x)),-\omega_r(-\rho_0^{\ell}(x))\}
\end{equation}
whenever $x\in \E(A)$.
We note that, when $x+\omega_c(x)\in \pm \E(A)$, $\sigma(x+\omega_c(x))=\rho_0^{\ell}(x+\omega_c(x))$ and hence, for such $x$, the realization of relation \ref{Condizione} only depends on the elements of $A$. Therefore we consider the following, weaker, condition
\begin{equation}\label{Condizione2}
\begin{cases}
0=0 \mbox{ if } x+\omega_c(x)\not\in \pm \E(A),\\
\rho_0^{\ell}(x+\omega_c(x))-\rho_0^{\ell}(x)\in \{\omega_c(\rho_0^{\ell}(x)),\omega_r^{-1}(\rho_0^{\ell}(x)),-\omega_c^{-1}(-\rho_0^{\ell}(x)),-\omega_r(-\rho_0^{\ell}(x))\}\\
\mbox{otherwise.}
\end{cases}
\end{equation}
Clearly, the probability that condition \refeq{Condizione2} holds is greater than the probability that condition \refeq{Condizione} holds.

Now we assume, without loss of generality, that $1\in \E(A)$ and we want to provide an upper bound on the probability that \refeq{Condizione2} holds. We begin by upper-bounding the probability that \refeq{Condizione2} holds for $x=1$. Here it will be more convenient to extend again that event by considering the following three ones.
\begin{itemize}
\item[$E1$)] $1+\omega_c(1)\not\in \pm\E(A)$ or $1+\omega_c(1)$ is one of the following values
\begin{equation*}\pm\{1,\omega_c(1)\},\end{equation*}
\item[$E2$)] $\sigma(1+\omega_c(1))$ is in one of the following positions $$\{p(1),\alpha_c(p(1)),p(\rho_0^{\ell}(1)),\alpha_c(p(\rho_0^{\ell}(1))),\alpha_r^{-1}(p(\rho_0^{\ell}(1))),\alpha_c^{-1}(p(\rho_0^{\ell}(1))),\alpha_r(p(\rho_0^{\ell}(1)))\},$$
\item[$E3$)] this event is condition \refeq{Condizione}, i.e.
$$\sigma(1+\omega_c(1))-\rho_0^{\ell}(1)\in \{\omega_c(\rho_0^{\ell}(1)),\omega_r^{-1}(\rho_0^{\ell}(1)),-\omega_c^{-1}(-\rho_0^{\ell}(1)),-\omega_r(-\rho_0^{\ell}(1))\}.$$
\end{itemize}
Using this notation, we want to bound the probability that $E1$ or $E3$ occur: here the auxiliary event $E2$ has been introduced for convenience reasons. Indeed, we note that
$$E1\cup E3=E1\cup (\overline{E1}\cap E3)\subseteq E1\cup (\overline{E1}\cap E2) \cup ((\overline{E1}\cap \overline{E2})\cap E3).$$
This implies that, if \refeq{Condizione2} holds for $x=1$, then either $E1$ holds, or $E1$ does not hold and $E2$ holds, or $E1$ and $E2$ do not hold and $E3$ does. Now we evaluate the probability of each of these cases: their sum will provide an upper bound to the probability that \refeq{Condizione2} holds.
\begin{itemize}
\item[1)]\textbf{Estimation of }$\mathbf{\mathbb{P}(E1)}$: We have $|\E(A)\setminus \{1\}|=nk-1$ possible choices for $\omega_c(1)$ and hence $nk-1$ possible values for $1+\omega_c(1)$. Among these values, at most $t+1$ give elements of $(\Z_v\setminus (\pm\E(A)))\cup \pm\{1,\omega_c(1)\}$ since $1+\omega_c(1)\not\in \{0,1,\omega_c(1)\}$. Therefore
$$\mathbb{P}(E1)\leq \frac{t+1}{nk-1}.$$
\item[2)]\textbf{Estimation of }$\mathbf{\mathbb{P}(E2|\overline{E1})}$: Since we are assuming $\overline{E1}$, $E2|\overline{E1}$ is equivalent to require that
$p(\rho_0^{\ell}(1+\omega_c(1)))$ is in $$\{p(1),\alpha_c(p(1)),p(\rho_0^{\ell}(1)),\alpha_c(p(\rho_0^{\ell}(1))),\alpha_r^{-1}(p(\rho_0^{\ell}(1))),\alpha_c^{-1}(p(\rho_0^{\ell}(1))),\alpha_r(p(\rho_0^{\ell}(1)))\}.$$
Note that $p(\rho_0^{\ell}(1+\omega_c(1)))$ depends on the position of $1+\omega_c(1)$. Indeed, due to Definition \ref{ArchdeaconEmbedding}, and since $\ell$ is even, we have that,
$$p(\rho_0^{\ell}(1+\omega_c(1)))=\begin{cases} (\alpha_c\circ \alpha_r)^{\ell/2}(p(1+\omega_c(1))) \mbox{ if } 1+\omega_c(1)\in \E(A),\\
(\alpha_r\circ \alpha_c)^{\ell/2}(p(1+\omega_c(1))) \mbox{ if } 1+\omega_c(1)\in -\E(A).
\end{cases}
$$
Moreover, since $1+\omega_c(1)\not\in \pm\{1,\omega_c(1)\}$, the position of $1+\omega_c(1)$ varies, uniformly at random among the other $nk-2$ possible ones. This means that $E2$ is satisfied for at most $7$ positions of $1+\omega_c(1)$ while we have $nk-2$ possible choices for $p(1+\omega_c(1))$. Therefore
$$\mathbb{P}(E2|\overline{E1})\leq \frac{7}{nk-2}.$$
\item[3)]\textbf{Estimation of }$\mathbf{\mathbb{P}(E3|(\overline{E2},\overline{E1}))}$: Since $E2$ does not hold, $p(\rho_0^{\ell}(1+\omega_c(1)))\not\in\{p(1),\alpha_c(p(1))\}$. In this case, since $\ell$ is even and because of Definition \ref{ArchdeaconEmbedding}, $\rho_0$ maps $\E(A)$ onto $\E(A)$ and $-\E(A)$ onto $-\E(A)$. Therefore, $p(\rho_0^{\ell}(1+\omega_c(1)))=p(1+\omega_c(1))$ would imply $\rho_0^{\ell}(1+\omega_c(1))=1+\omega_c(1)$ that is not possible since $\ell\not=0$. It follows that $p(\rho_0^{\ell}(1+\omega_c(1)))\not=p(1+\omega_c(1))$.

Hence the value of $\rho_0^{\ell}(1+\omega_c(1))$ varies uniformly at random among $nk-3$ possible ones.
Moreover, since $E2$ does not hold, we also have that $p(\rho_0^{\ell}(1+\omega_c(1)))$ does not belong to
$$\{p(\rho_0^{\ell}(1)),\alpha_c(p(\rho_0^{\ell}(1))),\alpha_r^{-1}(p(\rho_0^{\ell}(1))),\alpha_c^{-1}(p(\rho_0^{\ell}(1))),\alpha_r(p(\rho_0^{\ell}(1)))\}.$$
Hence $\rho_0^{\ell}(1+\omega_c(1))$ is independent from the values of
$$\{\rho_0^{\ell}(1),\omega_c(\rho_0^{\ell}(1)),\omega_r^{-1}(\rho_0^{\ell}(1)),-\omega_c^{-1}(-\rho_0^{\ell}(1)),-\omega_r(-\rho_0^{\ell}(1))\}.$$
Once these $5$ values have been given, we have at most $4$ values of $\rho_0^{\ell}(1+\omega_c(1))$ that satisfy condition \refeq{Condizione2}. Here we still have at least $nk-8$ possible values for $\rho_0^{\ell}(1+\omega_c(1))$. Therefore
$$\mathbb{P}(E3|(\overline{E2},\overline{E1}))\leq \frac{4}{nk-8}.$$
\end{itemize}
Summing up, the probability that $$\sigma(1+\omega_c(1))-\rho_0^{\ell}(1)\in \{\omega_c(\rho_0^{\ell}(1)),\omega_r^{-1}(\rho_0^{\ell}(1)),-\omega_c^{-1}(-\rho_0^{\ell}(1)),-\omega_r(-\rho_0^{\ell}(1))\}$$ is at most of $\frac{c_1}{nk}$ for some constant $c_1$ that does not depend on $n$ and $k$.

Now we choose a position $(i,j)\in B=skel(A)$ that does not belong to
\begin{equation*}\{p(1),\alpha_c(p(1)),p(1+\omega_c(1)),p(\sigma(1+\omega_c(1))),p(\rho_0^{\ell}(1)),\alpha_c(p(\rho_0^{\ell}(1))),\end{equation*}
\begin{equation*} \alpha_r^{-1}(p(\rho_0^{\ell}(1))), \alpha_c^{-1}(p(\rho_0^{\ell}(1))),\alpha_r(p(\rho_0^{\ell}(1)))\}.\end{equation*}
We name by $\bar{x}$ the element in position $(i,j)$. Since condition \refeq{Condizione2} must hold for any $x\in \E(A)$, now we assume that it holds for $x=1$ and we want to provide an upper-bound to the probability that it is satisfied also by $x=\bar{x}$.
Here we proceed as for $x=1$. We consider three events defined in a similar way as the ones of the case $x=1$: we do not use exactly $E1, E2$, and $E3$ because we are assuming that \refeq{Condizione2} holds for $x=1$.
\begin{itemize}
\item[$E1'$)] $\bar{x}+\omega_c(\bar{x})\not\in \pm\E(A)$ or $\bar{x}+\omega_c(\bar{x})$ is one of the following values
\begin{equation*}\pm\{\bar{x},\omega_c(\bar{x}),1,\omega_c(1),1+\omega_c(1),\sigma(1+\omega_c(1)),\rho_0^{\ell}(1),\omega_c(\rho_0^{\ell}(1)),\end{equation*}
\begin{equation*}\omega_r^{-1}(\rho_0^{\ell}(1)),\omega_c^{-1}(-\rho_0^{\ell}(1)),\omega_r(-\rho_0^{\ell}(1))\},\end{equation*}
\item[$E2'$)] $\sigma(\bar{x}+\omega_c(\bar{x}))$ is in one of the following positions (that are the ones involved in the events $E1, E2$, and $E3$)
\begin{equation*}\{p(1),\alpha_c(p(1)),p(1+\omega_c(1)),p(\sigma(1+\omega_c(1))),\alpha_c(p(\rho_0^{\ell}(1))),\end{equation*}\begin{equation*} \alpha_r^{-1}(p(\rho_0^{\ell}(1))),\alpha_c^{-1}(p(\rho_0^{\ell}(1))),\alpha_r(p(\rho_0^{\ell}(1)))\}\end{equation*}
or in one of the following ones (that depend on $p(\bar{x})$)
$$\{p(\bar{x}),\alpha_c(p(\bar{x})),p(\rho_0^{\ell}(\bar{x})),\alpha_c(p(\rho_0^{\ell}(\bar{x}))),\alpha_r^{-1}(p(\rho_0^{\ell}(\bar{x}))),\alpha_c^{-1}(p(\rho_0^{\ell}(\bar{x}))),\alpha_r(p(\rho_0^{\ell}(\bar{x})))\},$$
\item[$E3'$)] this event is condition \refeq{Condizione}, i.e.
$$\sigma(\bar{x}+\omega_c(\bar{x}))-\rho_0^{\ell}(\bar{x})\in \{\omega_c(\rho_0^{\ell}(\bar{x})),\omega_r^{-1}(\rho_0^{\ell}(\bar{x})),-\omega_c^{-1}(-\rho_0^{\ell}(\bar{x})),-\omega_r(-\rho_0^{\ell}(\bar{x}))\}.$$
\end{itemize}
We observe that \refeq{Condizione2} holds for $x=\bar{x}$, then either $E1'$ holds, or $E1'$ does not hold and $E2'$ holds, or $E1'$ and $E2'$ do not hold and $E3'$ does. Here, reasoning as in the case $x=1$, we obtain that the sum of these probabilities is at most of $\frac{c_2}{nk}$ for some constant $c_2$ that does not depend on $n$ and $k$.

Since \refeq{Condizione2} must hold both for $x=1$ and for $x=\bar{x}$, the probability that there exists an automorphism $\sigma$ whose restriction to $N(K_{\frac{2nk+t}{t}\times t},0)$ coincides with $\rho_0^{\ell}$ is, considering $t$ fixed, $O\left(\frac{1}{(nk)^2}\right)$.
\end{proof}

\begin{lem}\label{ProbDirectNK}
Let $B$ be a set of cells of an $m\times n$ array that contains exactly $h\geq 3$ cells in each row and exactly $k\geq3$ cells in each column and let us assume it admits two compatible orderings $\alpha_r$ and $\alpha_c$. Given $\Omega\subset \Z_{2nk+t}$ such that $[\pm x \mid x \in \Omega]$ contains each element of $\Z_{2nk+t}\setminus \frac{(2nk+t)}{t}\Z_{2nk+t}$ exactly once, we consider the set $\mathcal{A}$ of all the $\Q\H_t(m,n;h,k)$, $A$, such that:
\begin{itemize}
\item[a)] $skel(A)=B,$
\item[b)] $\E(A)=\Omega.$
\end{itemize}
Let us choose, uniformly at random, $A\in \mathcal{A}$ and let us denote by $\omega_r$ and $\omega_c$ the compatible orderings of $A$ that correspond to $\alpha_r$ and $\alpha_c$, by $\Pi$ the Archdeacon embedding of $K_{\frac{2nk+t}{t}\times t}$ defined by $A$ (see Definition \ref{ArchdeaconEmbedding}), and by $\rho$ the corresponding rotation.

Then the probability that there exists $\sigma \in Aut_0^+(\Pi)$ such that $\sigma|_{N(K_{\frac{2nk+t}{t}\times t},0)}=\rho_0^{nk}$, is, considering $t$ fixed, $O\left(\frac{1}{(nk)^2}\right)$.
\end{lem}
\begin{proof}
Because of Lemma \ref{ProbDirectNK}, we can assume $nk$ to be odd.
In this case $\rho_0^{nk}$ interchanges $\E(A)$ with $-\E(A)$ and vice versa. More precisely, due to Definition \ref{ArchdeaconEmbedding}, we have that
\begin{equation}\label{rhoNK}\rho_0^{nk}(z)=\begin{cases} -\omega_c\circ(\omega_r\circ \omega_c)^{(nk-1)/2}(z) \mbox{ if } z\in \E(A),\\
\omega_r\circ(\omega_c\circ \omega_r)^{(nk-1)/2}(-z) \mbox{ if } z\in -\E(A).
\end{cases}
\end{equation}
Here we consider the set of the positions $\mathcal{P}$ of $B$ defined by
$$\mathcal{P}:=\{(i,j)\in B: \alpha_c\circ(\alpha_r\circ \alpha_c)^{(nk-1)/2}(i,j)=(i,j)\}.$$
If we take $z\in \E(A)$, then, because of equation \refeq{rhoNK}, we have that $p(z)=p(\rho_0^{nk}(z))$ implies $p(z)\in \mathcal{P}$.
On the other hand, if $z$ belongs to $-\E(A)$ and $p(z)=p(\rho_0^{nk}(z))$, since $\rho_0^{nk}$ interchanges $\E(A)$ with $-\E(A)$, it follows that $-z=\rho_0^{nk}(z)$. Here, since $\rho_0$ has order $2nk$, $\rho_0^{nk}$ is an involution. Therefore we also have that $z=\rho_0^{nk}(-z)$ and, since $-z\in \E(A)$, that
$$p(z)=p(\rho_0^{nk}(-z))= \alpha_c\circ(\alpha_r\circ \alpha_c)^{(nk-1)/2}(p(-z))= \alpha_c\circ(\alpha_r\circ \alpha_c)^{(nk-1)/2}(p(z)).$$
This means that $p(z)=p(\rho_0^{nk}(z))$ implies, for any $z\in \pm \E(A)$, that $p(z)\in \mathcal{P}$.

Now we divide the proof of this Lemma into two cases according to the cardinality of $\mathcal{P}$.

\textbf{CASE 1: $\mathbf{|\mathcal{P}|<10}$.} Here we proceed as in the proof of Lemma \ref{ProbDirect}. We just modify the events $E2$ and $E2'$ as follows:
\begin{itemize}
\item[1)] in the event $E2$ we also allow the possibility that $1+\omega_c(1)\in \mathcal{P}$,
\item[2)] similarly, also in the event $E2'$, we add the possibility that $\overline{x}+\omega_c(\overline{x})\in \mathcal{P}$.
\end{itemize}
Here, the estimation of $\mathbb{P}(E1)$ can be done in exactly the same way as Lemma \ref{ProbDirect}.

We also note that the new event $E2$ is satisfied by at most other $9$ positions of $1+\omega_c(1)$. Hence we obtain the following upper-bound
$$\mathbb{P}(E2|\overline{E1})\leq \frac{16}{nk-2}.$$
Then, during the estimation of $\mathbb{P}(E3|\overline{E2},\overline{E1})$, we can assume $1+\omega_c(1)\not\in \mathcal{P}$ that implies $$p(\rho_0^{nk}(1+\omega_c(1)))\not=p(1+\omega_c(1)).$$
Therefore we obtain the same upper-bound of Lemma \ref{ProbDirect} for $\mathbb{P}(E3|\overline{E2},\overline{E1})$.

Since these arguments work also for the estimation of $\mathbb{P}(E2'|\overline{E1'})$ and $\mathbb{P}(E3'|\overline{E2'},\overline{E1'})$, we obtain that, if $|\mathcal{P}|<10$, the probability that there exists $\sigma \in Aut_0^+(\Pi)$ such that $\sigma|_{N(K_{\frac{2nk+t}{t}\times t},0)}=\rho_0^{nk}$, is, considering $t$ fixed, $O\left(\frac{1}{(nk)^2}\right)$.

\textbf{CASE 2: $\mathbf{|\mathcal{P}|\geq10}$.} In this case we consider the set of the positions $\mathcal{B}$ of $B$ such that
$$\mathcal{B}:=\{(i,j)\in B: \alpha_c\circ (\alpha_r\circ \alpha_c)^{(nk-1)/2}(i,j)\not=\alpha_c(i,j)\}.$$
We claim that $\mathcal{P}\subseteq\mathcal{B}$.

Here, since $k\geq 3>2$, we have that $(i,j)\not=\alpha_c(i,j)$. Therefore any $(i,j)\in \mathcal{P}$ is such that
$$\alpha_c\circ (\alpha_r\circ \alpha_c)^{(nk-1)/2}(i,j)=(i,j)\not=\alpha_c(i,j)$$
that is $(i,j)\in \mathcal{B}$ and hence $\mathcal{P}\subseteq\mathcal{B}$.

This means that $|\mathcal{B}|\geq |\mathcal{P}|\geq 10$. Now we proceed as in the proof of Lemma \ref{ProbDirectNK} with the following changes:
\begin{itemize}
\item[1)] it is not restrictive, in this estimation, to assume that $1\in \mathcal{B}$,
\item[2)] in the event $E3$ we consider condition \refeq{CondizioneForte} of Proposition \ref{CondizioniGlobale} instead of \refeq{CondizioneDebole}. Since $\rho_0^{nk}$ interchanges $\E(A)$ and $-\E(A)$, the event $E3$ is here
$$\sigma(1+\omega_c(1))-\rho_0^{nk}(1)\in \{-\omega_c^{-1}(-\rho_0^{nk}(1)),-\omega_r(-\rho_0^{nk}(1))\},$$
\item[3)] since $|\mathcal{B}|\geq 10$, $\overline{x}$ can be chosen in $\mathcal{B}$,
\item[4)] we consider, also in the event $E3'$, condition \refeq{CondizioneForte} of Proposition \ref{CondizioniGlobale} instead of \refeq{CondizioneDebole}.
\end{itemize}
Here we have that the same estimations of Lemma \ref{ProbDirect} for $\mathbb{P}(E1)$ and $\mathbb{P}(E2|\overline{E1})$ still hold. For the event $(E3|\overline{E2},\overline{E1})$, instead, we provide the following upper-bound.
\begin{itemize}
\item[3)]\textbf{Estimation of }$\mathbf{\mathbb{P}(E3|\overline{E2},\overline{E1})}$: In this case we can assume that $p(\rho_0^{nk}(1+\omega_c(1)))=p(1+\omega_c(1))$, otherwise we can apply the same argument of Lemma \ref{ProbDirect}. Since $\rho_0^{nk}$ interchanges $\E(A)$ and $-\E(A)$, we have that $\rho_0^{nk}(1+\omega_c(1))=-1-\omega_c(1)$ and hence condition \refeq{CondizioneForte} of Proposition \ref{CondizioniGlobale} becomes
\begin{equation}\label{Condizione3}-1-\omega_c(1)-\rho_0^{nk}(1)\in \{-\omega_c^{-1}(-\rho_0^{nk}(1)),-\omega_r(-\rho_0^{nk}(1))\}.\end{equation}
Here we note that, since we are assuming $h,k\geq 3$, we have that $\omega_c(1)\not=\omega_c^{-1}(1)$ and $\omega_c(1)\not=\omega_r(1)$.
It follows that, when $p(\rho_0^{nk}(1))=p(1)$, \refeq{Condizione3} can not be satisfied. Indeed, in this case we would have that $\rho_0^{nk}(1)=-1$ and condition \refeq{Condizione3} becomes
$$\omega_c(1)\in \{\omega_c^{-1}(1),\omega_r(1)\}.$$
Hence we can assume that $p(\rho_0^{nk}(1))\not=p(1)$. Furthermore, since $1\in \mathcal{B}$, we also have that $p(\rho_0^{nk}(1))\not=\alpha_c(p(1))$.

Therefore, we can assume $p(\rho_0^{nk}(1))\not \in \{p(1),\alpha_c(p(1))\}$ and, since $E2$ does not hold, we also have that $p(\rho_0^{nk}(1))\not=p(\rho_0^{nk}(1+\omega_c(1)))$.
This implies that the value of $\rho_0^{nk}(1)$ varies, uniformly at random, among $$-\E(A)\setminus\pm\{1,\omega_c(1),1+\omega_c(1)\}.$$
Moreover, since $h,k\geq 3>2$, we also have that
$$p(\rho_0^{nk}(1))\not\in \{\alpha_c^{-1}(p(\rho_0^{nk}(1))),\alpha_r(p(\rho_0^{nk}(1)))\}.$$
Hence $\rho_0^{nk}(1)$ is independent from the values of
$$\{-\omega_c^{-1}(-\rho_0^{nk}(1)),-\omega_r(-\rho_0^{nk}(1))\}.$$
Once these two values have been given, we have at most $2$ values of $\rho_0^{nk}(1)$ that satisfy condition \refeq{Condizione2}. Here we still have at least $nk-5$ possible values for $\rho_0^{nk}(1)$. Therefore
$$\mathbb{P}(E3|(\overline{E2},\overline{E1}))\leq \frac{2}{nk-5}.$$
\end{itemize}
Since the same arguments work also for the estimation of $\mathbb{P}(E3'|\overline{E2'},\overline{E1'})$, we obtain that, also when $|\mathcal{P}|\geq 10$, the probability that there exists $\sigma \in Aut_0^+(\Pi)$ such that $\sigma|_{N(K_{\frac{2nk+t}{t}\times t},0)}=\rho_0^{nk}$, is, considering $t$ fixed, $O\left(\frac{1}{(nk)^2}\right)$.
\end{proof}
Let $A$ be a quasi Heffter array that admits compatible orderings $\omega_r$ and $\omega_c$, and let us consider the associated rotation $\rho_0=(x_1,\dots,x_{2nk})$. Since $\rho_0$ is a cyclic permutation, we can assume that $p(x_1)$ is minimal, with respect to the lexicographic order, among the positions of $skel(A)$ and that $x_1\in \E(A)$.
Then, because of Proposition \ref{CondizioneLocale2}, the elements of $Aut_0^-$ can be expressed on $N(K_{\frac{(2nk+t)}{t}\times t},0)$ as a function of the map $\rho_0$ (and of a suitable integer $\ell$) and these elements are involutions.
Hence we can adapt the proof of Lemma \ref{ProbDirectNK} to obtain the following statement.
\begin{lem}\label{ProbInverse}
Let $B$ be a set of cells of an $m\times n$ array that contains exactly $h\geq 3$ cells in each row and exactly $k\geq 3$ cells in each column and let us assume it admits two compatible orderings $\alpha_r$ and $\alpha_c$. Given $\Omega\subset \Z_{2nk+t}$ such that $[\pm x \mid x \in \Omega]$ contains each element of $\Z_{2nk+t}\setminus \frac{(2nk+t)}{t}\Z_{2nk+t}$ exactly once, we consider the set $\mathcal{A}$ of all the $\Q\H_t(m,n;h,k)$, $A$, such that:
\begin{itemize}
\item[a)] $skel(A)=B,$
\item[b)] $\E(A)=\Omega.$
\end{itemize}
Let us choose, uniformly at random, $A\in \mathcal{A}$ and let us denote by $\omega_r$ and $\omega_c$ the compatible orderings of $A$ that correspond to $\alpha_r$ and $\alpha_c$, by $\Pi$ the Archdeacon embedding of $K_{\frac{2nk+t}{t}\times t}$ defined by $A$ (see Definition \ref{ArchdeaconEmbedding}), and by $\rho$ the corresponding rotation. We also set $\rho_0=(x_1,x_2,\dots,x_{2nk})$ where $p(x_1)$ is minimal, with respect to the lexicographic order, among the positions of $B$ and $x_1\in \Omega$.

Then, for a fixed $\ell\in \{1,\dots,2nk\}$, the probability that there exists $\sigma \in Aut_0^-(\Pi)$ such that $\sigma(x_i)=x_{\ell-i}$, is, considering $t$ fixed, $O\left(\frac{1}{(nk)^2}\right)$.
\end{lem}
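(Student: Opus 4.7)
The plan is to adapt the proof of Lemma \ref{ProbDirectNK} almost verbatim, with two modifications: replace Proposition \ref{CondizioneLocale1} by Proposition \ref{CondizioneLocale2} as the source of the local shape of $\sigma$, and replace the cyclic map $z\mapsto \rho_0^{\ell}(z)$ on $\pm\E(A)$ by the involution $\tilde\sigma:\pm\E(A)\to\pm\E(A)$ defined by $\tilde\sigma(x_j):=x_{\ell-j}$, indices read modulo $2nk$.

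First I would translate the rule $\sigma(x_j)=x_{\ell-j}$ into an explicit formula for $\tilde\sigma$ on $\pm\E(A)$ in terms of $\omega_r$ and $\omega_c$, analogous to equation \refeq{rhoNK}; the two cases $\ell$ even / $\ell$ odd govern whether $\tilde\sigma$ preserves or interchanges $\E(A)$ and $-\E(A)$. In both cases, since $\tilde\sigma$ is an involution, for any $z\in\pm\E(A)$ one has $p(\tilde\sigma(z))=p(z)$ only when $z$ belongs to the exceptional set
$$\mathcal{P}:=\{(i,j)\in B:\ p(\tilde\sigma)(i,j)=(i,j)\},$$
which is the precise analog of the set $\mathcal{P}$ appearing in Lemma \ref{ProbDirectNK}. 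As in that lemma, I would split the estimation into the two cases $|\mathcal P|<10$ and $|\mathcal P|\geq 10$.

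Next, in both cases I would apply condition \refeq{CondizioneDebole} (or rather \refeq{CondizioneForte}) of Proposition \ref{CondizioniGlobale} for the fixed element $x=1$, using the same three auxiliary events $E1,E2,E3$ introduced in Lemma \ref{ProbDirect}, but with $\rho_0^\ell$ replaced by $\tilde\sigma$ throughout. Because $\tilde\sigma$ is an involution with at most one or two fixed points among $\{x_1,\ldots,x_{2nk}\}$, coincidences of the form $p(\tilde\sigma(1+\omega_c(1)))=p(1+\omega_c(1))$ only occur for $1+\omega_c(1)\in\mathcal P$; in the small-$\mathcal P$ regime this is absorbed into $E2$ and handled exactly as in Case 1 of Lemma \ref{ProbDirectNK}, while in the large-$\mathcal P$ regime one works with the analog
$$\mathcal{B}:=\{(i,j)\in B:\ \alpha_c^{-1}\circ p(\tilde\sigma)(i,j)\ne (i,j)\}\supseteq\mathcal{P}$$
(using $k\ge 3$ to guarantee the inclusion) and uses the stronger condition \refeq{CondizioneForte} to cut the set of admissible values of $\tilde\sigma(1)$ down to $O(1)$ among $nk-O(1)$ possibilities, exactly as in Case 2 of Lemma \ref{ProbDirectNK}. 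Either way this yields $\mathbb{P}(E1\cup E3)=O(1/nk)$.

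Finally I would repeat the argument with a second element $\bar x$ chosen in the suitable "generic" subset (in Case 2, $\bar x\in \mathcal B$, which is possible because $|\mathcal B|\ge 10$), using the triple $E1',E2',E3'$ in perfect analogy with Lemma \ref{ProbDirect}; the two estimates are essentially independent, so their product gives the desired $O(1/(nk)^2)$ bound. The main obstacle is purely notational: carefully rewriting the four possible shapes of a face through $(0,z)$ in terms of $\tilde\sigma$ rather than $\rho_0^\ell$, and verifying that $h,k\ge 3$ still forces the exclusions $p(\tilde\sigma(1))\ne p(1)$, $\omega_c(1)\ne\omega_c^{-1}(1)$, $\omega_c(1)\ne \omega_r(1)$ used in Case 2 of Lemma \ref{ProbDirectNK} to rule out trivial solutions of the analog of \refeq{Condizione3}. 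No new probabilistic idea is required beyond those already developed in Lemmas \ref{ProbDirect} and \ref{ProbDirectNK}.
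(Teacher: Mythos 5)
Your plan is correct and follows essentially the same route as the paper: reduce to the machinery of Lemmas \ref{ProbDirect} and \ref{ProbDirectNK} via Propositions \ref{CondizioneLocale2} and \ref{CondizioniGlobale}, introduce the set $\mathcal{P}$ of positions fixed by the map induced on $skel(A)$ (which is deterministic once $B$, $\alpha_r$, $\alpha_c$ and the normalization of $x_1$ are fixed), and rerun the $E1,E2,E3$ and $E1',E2',E3'$ estimates, using $\mathcal{B}$ and condition \refeq{CondizioneForte} when $\mathcal{P}$ is large. The paper organizes this by parity of $\ell$ (noting that for $\ell$ even one always has $|\mathcal{P}|=2<10$, so only the small-$\mathcal{P}$ analysis is needed, while for $\ell$ odd the coincidences come from $\sigma(z)=-z$ rather than fixed points of $\sigma$), but this is only a minor organizational difference from your uniform small/large-$\mathcal{P}$ dichotomy.
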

\begin{proof}
We divide the proof into two cases according to the parity of $\ell$.

\textbf{CASE 1: $\ell$ is even.} Here $\sigma$ is an involution that maps $\E(A)$ in $\E(A)$ and $-\E(A)$ in $-\E(A)$. Therefore $p(\sigma(x_i))=p(x_i)$ implies $x_i=\sigma(x_i)=x_{\ell-i}$ and $2i\equiv \ell\pmod{2nk}$ that is $i\in\{\ell/2, nk+\ell/2\}$.
Hence, if we consider the set $\mathcal{P}$ defined by
$$\mathcal{P}:=\{p(x_{\ell/2}),p(x_{nk+\ell/2})\},$$
we have that $p(\sigma(z))=p(z)$ implies $p(z)\in \mathcal{P}$ whenever $z\in \pm\E(A)$. Note that, since the position of $x_1$ has been fixed and because of Definition \ref{ArchdeaconEmbedding}, however we take $i\in \{1,\dots,2nk\}$, the position of $x_i$ only depends on $B$, $\alpha_r$ and $\alpha_c$. It follows that the set $\mathcal{P}$ is well defined once $B$, $\alpha_r$ and $\alpha_c$ are given. This means that, since $|\mathcal{P}|=2<10$, we can proceed as in the proof of CASE 1 of Lemma \ref{ProbDirectNK}.

\textbf{CASE 2: $\ell$ is odd.} In this case $\sigma$ is an involution that interchanges $\E(A)$ and $-\E(A)$. Here, we consider the set $\mathcal{P}$ defined by
$$\mathcal{P}:=\{p(x_i):\ i\in \{1,\dots,2nk\} \mbox{ and }x_i=-\sigma(x_i)=-x_{\ell-i}\}.$$
Note that the set of pairs $(i,j)$ such that $x_i=-x_{j}$ are those for which $p(x_i)=p(x_{j})$ and $i\not=j$. Since the position of $x_1$ has been fixed, and because of Definition \ref{ArchdeaconEmbedding}, this set of pairs (and the positions of the corresponding $x_i$ and $x_{j}$) is well defined once we know $B$, $\alpha_r$ and $\alpha_c$. This means that also $\mathcal{P}$ is well defined when $B$, $\alpha_r$ and $\alpha_c$ are given.
Therefore we can proceed, exactly as in the proof of Lemma \ref{ProbDirectNK}, by considering two cases according to the cardinality of $\mathcal{P}$.

In all cases, we obtain that the probability that there exists $\sigma \in Aut_0^-(\Pi)$ such that $\sigma(x_i)=x_{\ell-i}$, is, considering $t$ fixed, $O\left(\frac{1}{(nk)^2}\right)$.
\end{proof}
\begin{thm}\label{ProbabilityQuasiHeffter}
Let $B$ be a set of cells of an $m\times n$ array that contains exactly $h\geq 3$ cells in each row and exactly $k\geq 3$ cells in each column and let us assume it admits two compatible orderings $\alpha_r$ and $\alpha_c$. Given $\Omega\subset \Z_{2nk+t}$ such that $[\pm x \mid x \in \Omega]$ contains each element of $\Z_{2nk+t}\setminus \frac{(2nk+t)}{t}\Z_{2nk+t}$ exactly once, we consider the set $\mathcal{A}$ of all the $\Q\H_t(m,n;h,k)$, $A$, such that:
\begin{itemize}
\item[a)] $skel(A)=B,$
\item[b)] $\E(A)=\Omega.$
\end{itemize}
Let us choose, uniformly at random, $A\in \mathcal{A}$, let us denote by $\omega_r$ and $\omega_c$ the compatible orderings of $A$ that correspond to $\alpha_r$ and $\alpha_c$, and by $\Pi$ the Archdeacon embedding of $K_{\frac{2nk+t}{t}\times t}$ defined by $A$ (see Definition \ref{ArchdeaconEmbedding}).

Then the probability that there exists $\sigma \in Aut(\Pi)$ that is not a translation (i.e. that $Aut(\Pi)\not=\Z_{2nk+t}$) is, considering $t$ fixed, $O\left(\frac{1}{nk}\right)$.
\end{thm}
\begin{proof}
We note that the existence of $\sigma \in Aut(\Pi)$ that is not a translation is equivalent to the existence of $\sigma \in Aut_0(\Pi)$ different from the identity. So we will show that the probability that there exists $\sigma \in Aut_0(\Pi)$ different from the identity is, considering $t$ fixed, $O\left(\frac{1}{nk}\right)$.

For this purpose, first, we evaluate the expected value $\mathbb{E}(X)$ of the random variable $X$ given by the cardinality of $Aut_0^+(\Pi)\setminus \{id\}$.

Here we denote by $\rho$ the rotation corresponding to the embedding $\Pi$.
Due to Proposition \ref{CondizioneLocale1}, for each such automorphism $\sigma$ there is $1\leq\ell\leq 2nk-1$ such that $\sigma|_{N(K_{\frac{2nk+t}{t}\times t},0)}=\rho_0^\ell$. Therefore, because of the linearity of the expected value
$$\mathbb{E}(X)=\sum_{\ell=1}^{2nk-1} \mathbb{E}(|\{\sigma\in Aut_0^+(\Pi): \sigma|_{N(K_{\frac{2nk+t}{t}\times t},0)}=\rho_0^{\ell}\} |).$$
Moreover, again due to Proposition \ref{CondizioneLocale1}, we have that, for each $\ell \in \{1,\dots,2nk-1\}$, there exists at most one automorphism $\sigma$ such that $\sigma|_{N(K_{\frac{2nk+t}{t}\times t},0)}=\rho_0^{\ell}$.
Therefore
$$\mathbb{E}(|\{\sigma\in Aut_0^+(\Pi): \sigma|_{N(K_{\frac{2nk+t}{t}\times t},0)}=\rho_0^{\ell}\} |)=\mathbb{P}(\exists\ \sigma\in Aut_0^+(\Pi): \sigma|_{N(K_{\frac{2nk+t}{t}\times t},0)}=\rho_0^{\ell}).$$
Due to Lemmas \ref{ProbDirect} and \ref{ProbDirectNK}, we have that, considering $t$ fixed, there exists a constant $c$ such that
$$\mathbb{P}(\exists\ \sigma\in Aut_0^+(\Pi): \sigma|_{N(K_{\frac{2nk+t}{t}\times t},0)}=\rho_0^{\ell})\leq \frac{c}{(nk)^2}=O\left(\frac{1}{(nk)^2}\right).$$
Summing up, we have that
$$\mathbb{E}(X)\leq \frac{(2nk-1)c}{(nk)^2}\leq \frac{c'}{nk}=O\left(\frac{1}{nk}\right)$$
for some constant $c'$ independent from $n$ and $k$.

Similarly, denoted by $\mathbb{E}(Y)$ the expected value of the random variable $Y$ given by the number of automorphisms in $Aut_0^-(\Pi)$, as a consequence of Lemma \ref{ProbInverse} and considering $t$ fixed, we have that there exists a constant $c''$ independent from $n$ and $k$ such that
$$\mathbb{E}(X)+\mathbb{E}(Y)=\mathbb{E}(X+Y)\leq \frac{\bar{c''}}{nk}=O\left(\frac{1}{nk}\right).$$
Since $$\mathbb{P}(Aut_0(\Pi)\not=\{id\})\leq \mathbb{E}(X+Y)$$
it follows that also $\mathbb{P}(Aut_0(\Pi)\not=\{id\})\leq \frac{\bar{c''}}{nk}=O\left(\frac{1}{nk}\right)$.
\end{proof}
Now we prove that a similar theorem holds also if we consider non-zero Heffter arrays instead of quasi-Heffter ones. First, we need to prove that, asymptotically in $h$ and $k$, almost all the $\Q\H$ are actually $\N\H$.
\begin{lem}\label{ProbabilityNHQH}
Let $B$ be a set of cells of an $m\times n$ array that contains exactly $h\geq 3$ cells in each row and exactly $k\geq 3$ cells in each column. Given $\Omega\subset \Z_{2nk+t}$ such that $[\pm x \mid x \in \Omega]$ contains each element of $\Z_{2nk+t}\setminus \frac{(2nk+t)}{t}\Z_{2nk+t}$ exactly once, we consider the set $\mathcal{A}$ of all the $\Q\H_t(m,n;h,k)$, $A$, such that:
\begin{itemize}
\item[a)] $skel(A)=B,$
\item[b)] $\E(A)=\Omega.$
\end{itemize}
Let us choose, uniformly at random, $A\in \mathcal{A}$, then the probability that $A$ is an $\N\H_t(m,n;h,k)$ is at least
$$1-\left(\frac{m}{mh-(h-1)}+\frac{n}{nk-(k-1)}\right)=1-O\left(\frac{1}{k}\right)-O\left(\frac{1}{h}\right).$$
Here we observe that, since $h,k\geq 3$, then $0< 1-\left(\frac{m}{mh-(h-1)}+\frac{n}{nk-(k-1)}\right)$.
\end{lem}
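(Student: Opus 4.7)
The plan is to apply a union bound to the $m+n$ events
\[
E_i^R := \{\Sigma R_i = 0\}, \quad E_j^C := \{\Sigma C_j = 0\},
\]
and bound each probability by $\tfrac{1}{nk-h+1}$ or $\tfrac{1}{nk-k+1}$ respectively. First I would observe that choosing $A\in\mathcal{A}$ uniformly amounts to choosing a uniformly random bijection $B\to\Omega$, since conditions $(a_1)$ and $(b_1)$ of Definition \ref{def:QH} are encoded by $\mathrm{skel}(A)=B$ and $\E(A)=\Omega$. In particular, for each row $R_i$, the $h$-element subset $S_i\subseteq\Omega$ of entries of $R_i$ is uniformly distributed among all $h$-subsets of $\Omega$, so
\[
\mathbb{P}(E_i^R) = \frac{N_0^{(h)}}{\binom{nk}{h}},
\]
where $N_0^{(h)}$ is the number of $h$-subsets of $\Omega$ whose elements sum to $0$ in $\Z_{2nk+t}$. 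The heart of the argument is a bound on $N_0^{(h)}$.

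The key counting step uses ordered tuples. For each ordered tuple $(y_1,\dots,y_{h-1})$ of distinct elements of $\Omega$, the sum $y_1+\cdots+y_h=0$ forces $y_h=-(y_1+\cdots+y_{h-1})$, so there is at most one element in $\Omega\setminus\{y_1,\dots,y_{h-1}\}$ completing the tuple to a bad subset. Conversely, each bad $h$-subset $S$ is the image of exactly $h!$ such ordered tuples (pick which element plays the role of $y_h$, then order the remaining $h-1$ elements). Counting both sides,
\[
N_0^{(h)} \cdot h! \le \frac{(nk)!}{(nk-h+1)!},
\qquad\text{hence}\qquad N_0^{(h)} \le \frac{\binom{nk}{h}}{nk-h+1}.
\]
Dividing by $\binom{nk}{h}$ gives $\mathbb{P}(E_i^R)\le \tfrac{1}{nk-h+1} = \tfrac{1}{mh-(h-1)}$, since $mh=nk$.

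Exactly the same argument applied to columns (where a random column contains a uniformly chosen $k$-subset of $\Omega$) yields $\mathbb{P}(E_j^C)\le \tfrac{1}{nk-k+1} = \tfrac{1}{nk-(k-1)}$. A union bound over the $m$ rows and $n$ columns then produces
\[
\mathbb{P}\bigl(A \text{ is not an } \N\H_t(m,n;h,k)\bigr) \le \frac{m}{mh-(h-1)} + \frac{n}{nk-(k-1)},
\]
which is the desired estimate after taking the complement. I do not anticipate a real obstacle here; the only point one has to be slightly careful about is the bijection between random $A\in\mathcal{A}$ and random bijections $B\to\Omega$ (so that each row of $A$ genuinely sees a uniformly random $h$-subset of $\Omega$), and the combinatorial identity $\binom{nk}{h-1}/h = \binom{nk}{h}/(nk-h+1)$ underlying the counting step.
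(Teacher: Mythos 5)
Your argument is correct, and it lands on exactly the same estimate as the paper, but by a self-contained route: the paper's proof is a one-liner that defines $X$ as the number of rows and columns summing to zero, imports the bound $\mathbb{E}(X)\leq \frac{m}{mh-(h-1)}+\frac{n}{nk-(k-1)}$ from Theorem 4.2 of \cite{CostaDellaFiorePasotti}, and concludes via $\mathbb{P}(X=0)\geq 1-\mathbb{E}(X)$. Your union bound over the events $E_i^R$, $E_j^C$ is the same first-moment argument in disguise, and the part you add — the identification of a uniform $A\in\mathcal{A}$ with a uniform bijection $B\to\Omega$, the observation that each row then carries a uniformly random $h$-subset of $\Omega$, and the ordered-tuple count giving $N_0^{(h)}\leq \binom{nk}{h}/(nk-h+1)$ (equivalently: fixing $h-1$ entries, at most one of the remaining $nk-h+1$ elements can complete a zero sum) — is precisely the content of the cited external result, here reproved from scratch. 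The bookkeeping is right: $mh=nk$ gives $nk-h+1=mh-(h-1)$, the "at most one completion" phrasing correctly covers the degenerate case where $-(y_1+\cdots+y_{h-1})$ is already among the chosen elements or outside $\Omega$, and each bad subset is counted $h!$ times. So what your version buys is independence from \cite{CostaDellaFiorePasotti}, at the cost of redoing a computation the paper deliberately outsources; there is no gap.
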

\begin{proof}
We denote by $E1$ the event that the array $A$ is an $\N\H_t(m,n;h,k)$.
Due to the proof of Theorem 4.2 of \cite{CostaDellaFiorePasotti} we have that, if we choose uniformly at random an element of $\mathcal{A}$, then the expected value of the variable $X$ defined by the number of rows and columns that sum to zero is
$$\mathbb{E}(X) \leq \left(\frac{m}{mh-(h-1)}+\frac{n}{nk-(k-1)}\right).$$
Since $\mathbb{P}(E1)\geq 1-\mathbb{E}(X)$ we have that
$$\mathbb{P}(E1) \geq 1-\left(\frac{m}{mh-(h-1)}+\frac{n}{nk-(k-1)}\right)=1-O\left(\frac{1}{k}\right)-O\left(\frac{1}{h}\right).$$
\end{proof}

\begin{thm}\label{ProbabilityNonZero}
Let $B$ be a set of cells of an $m\times n$ array that contains exactly $h\geq 3$ cells in each row and exactly $k\geq 3$ cells in each column and let us assume it admits two compatible orderings $\alpha_r$ and $\alpha_c$. Given $\Omega\subset \Z_{2nk+t}$ such that $[\pm x \mid x \in \Omega]$ contains each element of $\Z_{2nk+t}\setminus \frac{(2nk+t)}{t}\Z_{2nk+t}$ exactly once, we consider the set $\mathcal{D}$ of all the $\N\H_t(m,n;h,k)$, $D$, such that:
\begin{itemize}
\item[a)] $skel(D)=B,$
\item[b)] $\E(D)=\Omega.$
\end{itemize}
Let us choose, uniformly at random, $D\in \mathcal{D}$, let us denote by $\omega_r$ and $\omega_c$ the compatible orderings of $D$ that correspond to $\alpha_r$ and $\alpha_c$, and by $\Pi$ the Archdeacon embedding of $K_{\frac{2nk+t}{t}\times t}$ defined by $D$ (see Definition \ref{ArchdeaconEmbedding}).

Then the probability that there exists $\sigma \in Aut(\Pi)$ that is not a translation (i.e. that $Aut(\Pi)\not=\Z_{2nk+t}$) is, considering $t$ fixed,
$$O\left(\frac{1}{nk}\right).$$
\end{thm}
\begin{proof}
We consider here the set $\mathcal{A}$ of the quasi-Heffter arrays defined, as in Theorem \ref{ProbabilityQuasiHeffter}, on the same skeleton $B$ and on the same support $\Omega$ of $\mathcal{D}$. To evaluate the required probability we define the following events:
\begin{itemize}
\item[E1)] Given $A\in \mathcal{A}$, $A$ also belongs to $\mathcal{D}$,
\item[E2)] Given $A\in \mathcal{A}$ there exists $\sigma \in Aut(\Pi)$ that is not a translation.
\end{itemize}
Due to Lemma \ref{ProbabilityNHQH} we have that
$$\mathbb{P}(E1) \geq 1-\left(\frac{m}{mh-(h-1)}+\frac{n}{nk-(k-1)}\right)=1-O\left(\frac{1}{k}\right)-O\left(\frac{1}{h}\right).$$
Here we recall that, since $h,k\geq 3$, then $0< 1-\left(\frac{m}{mh-(h-1)}+\frac{n}{nk-(k-1)}\right)$.

Moreover, because of Theorem \ref{ProbabilityQuasiHeffter}, we have that, considering $t$ fixed,
$$\mathbb{P}(E2)=O\left(\frac{1}{nk}\right).$$
Using these notations the probability we need is $\mathbb{P}(E2|E1)$ and hence
$$\mathbb{P}(E2|E1)=\frac{\mathbb{P}(E2\cap E1)}{\mathbb{P}(E1)}\leq \frac{\mathbb{P}(E2)}{\mathbb{P}(E1)}=\frac{O\left(\frac{1}{nk}\right)}{1-O\left(\frac{1}{k}\right)-O\left(\frac{1}{h}\right)}=O\left(\frac{1}{nk}\right).$$
Here the last equality holds because of the positivity of $1-\left(\frac{m}{mh-(h-1)}+\frac{n}{nk-(k-1)}\right)$.
\end{proof}
\section{Crazy Knight's Tour Problem}
Looking for compatible orderings for Heffter array leads us to consider the following problem
introduced in \cite{CDP} (see also Remark 5.5 of \cite{PM}).
Given an $m\times n$ p.f. array $A$, by $r_i$ we denote the orientation of the $i$-th row,
precisely $r_i=1$ if it is from left to right and $r_i=-1$ if it is from right to left. Analogously, for the $j$-th
column, if its orientation $c_j$ is from top to bottom then $c_j=1$ otherwise $c_j=-1$. Assume that an orientation
$\R=(r_1,\dots,r_m)$
and $\C=(c_1,\dots,c_n)$ is fixed. Given an initial filled cell $(i_1,j_1)$ consider the sequence
$ L_{\R,\C}(i_1,j_1)=((i_1,j_1),(i_2,j_2),\ldots,(i_\ell,j_\ell),$ $(i_{\ell+1},j_{\ell+1}),\ldots)$
where $j_{\ell+1}$ is the column index of the filled cell $(i_\ell,j_{\ell+1})$ of the row $R_{i_\ell}$ next
to
$(i_\ell,j_\ell)$ in the orientation $r_{i_\ell}$,
and where $i_{\ell+1}$ is the row index of the filled cell of the column $C_{j_{\ell+1}}$ next to
$(i_\ell,j_{\ell+1})$ in the orientation $c_{j_{\ell+1}}$.

The problem proposed in \cite{CDP} is the following.
\begin{KN}
Given a p.f. array $A$,
do there exist $\R$ and $\C$ such that the list $L_{\R,\C}$ covers all the filled
cells of $A$?
\end{KN}

The \probname\ for a given array $A$ is denoted by $P(A)$.
Also, given a filled cell $(i,j)$, if $L_{\R,\C}(i,j)$ covers all the filled positions of $A$ we say that $(\R,\C)$ is a solution of $P(A)$.
The relationship between the Crazy Knight's Tour Problem and quasi-Heffter arrays is explained by the following result,
see \cite{CDP, CPPBiembeddings}.

\begin{thm}\label{EmbeddingArco}
Let $A$ be a $\Q\H_t(m,n;h,k)$ such that $P(A)$ admits a solution $(\R,\C)$. Then there exists a cellular biembedding $\psi$ of $K_{\frac{2nk+t}{t}\times t}$, such that every edge is on a face whose boundary length is a multiple of $h$ and on a face whose boundary length is a multiple of $k$.

Moreover, setting $v=2nk+t$, $\psi$ is $\Z_v$-regular.
\end{thm}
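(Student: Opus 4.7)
The plan is to reduce Theorem \ref{EmbeddingArco} to the already-established Theorem \ref{HeffterBiemb} by showing that a solution $(\R,\C)$ to the Crazy Knight's Tour Problem on $A$ yields, in a canonical way, a pair of compatible orderings $\omega_r$, $\omega_c$ of $A$ in the sense of Definition \ref{Compatible}. Once the compatibility is in place, the existence of a cellular biembedding of $K_{\frac{2nk+t}{t}\times t}$ whose faces have lengths that are multiples of $h$ (for the rows) and of $k$ (for the columns), together with $\Z_{v}$-regularity for $v=2nk+t$, follows immediately from Theorem \ref{HeffterBiemb}.

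Concretely, given the orientation $\R=(r_1,\dots,r_m)$, for each row $R_i$ I define $\omega_{R_i}$ to be the cyclic ordering of $\E(R_i)$ obtained by reading the filled cells of $R_i$ in the direction $r_i$ (so that $\omega_{R_i}$ sends the entry of a filled cell to the entry of the next filled cell of $R_i$ in the orientation $r_i$, wrapping around cyclically). Analogously $\C=(c_1,\dots,c_n)$ yields a cyclic ordering $\omega_{C_j}$ of $\E(C_j)$ for each column $C_j$. Setting $\omega_r=\omega_{R_1}\circ\cdots\circ\omega_{R_m}$ and $\omega_c=\omega_{C_1}\circ\cdots\circ\omega_{C_n}$, and passing to the induced permutations $\alpha_r,\alpha_c$ of $skel(A)$ (using the position map $p$), one sees directly from the definition of $L_{\R,\C}$ that the transition from the $\ell$-th cell $(i_\ell,j_\ell)$ of the sequence to the $(\ell+1)$-th cell $(i_{\ell+1},j_{\ell+1})$ is exactly a row-step via $\alpha_r$ followed by a column-step via $\alpha_c$; that is,
\begin{equation*}
(i_{\ell+1},j_{\ell+1})=(\alpha_c\circ\alpha_r)(i_\ell,j_\ell).
\end{equation*}

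Because $(\R,\C)$ is a solution of $P(A)$, the sequence $L_{\R,\C}(i_1,j_1)$ visits every filled cell, so $\alpha_c\circ\alpha_r$ is a single cycle of order $|skel(A)|=|\E(A)|$. By the discussion preceding and following Definition \ref{Compatible} (and in particular the reformulation of compatibility in terms of $\alpha_r\circ\alpha_c$ being a single cycle on $skel(A)$, which is equivalent to the same statement for $\alpha_c\circ\alpha_r$ since $\alpha_c\circ\alpha_r$ and $\alpha_r\circ\alpha_c$ are conjugate and hence have the same cycle type), this means exactly that $\omega_r$ and $\omega_c$ are compatible orderings of $A$. Applying Theorem \ref{HeffterBiemb} to $A$ with these compatible orderings then yields the required $\Z_v$-regular cellular biembedding of $K_{\frac{v}{t}\times t}$ with face-lengths divisible by $h$ or by $k$, finishing the proof.

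The conceptual heart of the argument is the translation, in the second paragraph, from the combinatorial/dynamical description of the Crazy Knight's Tour (alternating row and column moves in prescribed orientations) to the algebraic description of compatibility (a single-cycle condition on a composition of two skeleton permutations). No genuinely difficult estimate is required; the only care needed is the bookkeeping that ties the orientations $r_i,c_j$ to the correct cyclic orderings $\omega_{R_i},\omega_{C_j}$ and verifies that the composition order in $\alpha_c\circ\alpha_r$ matches the row-then-column structure of $L_{\R,\C}$.
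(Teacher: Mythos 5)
Your proposal is correct and follows essentially the same route as the paper: the paper cites Theorem \ref{EmbeddingArco} from \cite{CDP, CPPBiembeddings} and, in the remark immediately following it, records exactly your reduction — the orientations $(\R,\C)$ give the (possibly reversed) natural row/column orderings, a solution of $P(A)$ makes $\omega_r$ and $\omega_c$ compatible, and the conclusion then follows from Theorem \ref{HeffterBiemb}. Your bookkeeping identifying the tour step with $\alpha_c\circ\alpha_r$ on $skel(A)$ (and the conjugacy remark relating it to $\alpha_r\circ\alpha_c$) is exactly the intended argument.
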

\begin{rem}
If $A$ is a $\Q\H_t(m,n;h,k)$ such that $P(A)$ admits a solution $(\R,\C)$, then $A$ also admits two compatible orderings $\omega_c$ and $\omega_r$ that can be determined as follows. We first define the \emph{natural ordering} of a row (column) of $A$ as the ordering from left to right (from top to bottom). Then, for each row $R$ (column $C$), we consider $\omega_{R}$ to be the natural ordering if $r_i=1$ (resp $c_i=1$) and its inverse otherwise. As usual we set $\omega_r=\omega_{R_1}\circ \cdots \circ \omega_{R_m}$ and $\omega_c=\omega_{C_1}\circ \cdots \circ \omega_{C_n}$ and, since $(\R,\C)$ is a solution, $\omega_r$ and $\omega_c$ are compatibile. Moreover, the embedding obtained using $(\R,\C)$ via Theorem \ref{EmbeddingArco} is exactly the Archdeacon embedding of $K_{\frac{2nk+t}{t}\times t}$ defined by Definition \ref{ArchdeaconEmbedding}.
\end{rem}

Now, to present the results of this section we need some other definitions and notations.
Given an $n \times n$ p.f. array $A$, for $i\in \{1,\dots,n\}$ we define the $i$-th diagonal of $A$ as follows
$$D_i=\{(i,1),(i+1,2),\ldots,(i-1,n)\}.$$
Here all the arithmetic on the row and column indices is performed modulo $n$, where $\{1,2,\ldots,n\}$ is the set of reduced residues.
The diagonals $D_{i+1}, D_{i+2}, \ldots, D_{i+k}$ are called $k$ \emph{consecutive diagonals}.
\begin{defin}
Let $n,k$ be integers such that $n\geq k\geq 1$. An $n\times n$ p.f. array $A$ is said to be:
\begin{itemize}
\item[1)] \emph{$k$-diagonal} if the non-empty cells of $A$ are exactly those of $k$ diagonals,
\item[2)] \emph{cyclically $k$-diagonal} if the non-empty cells of $A$ are exactly those of $k$ consecutive diagonals.
\end{itemize}
\end{defin}
We recall the following results about solutions of $P(A)$.
\begin{prop}[\cite{CDP}]\label{ElencoResults}
Given a cyclically $k$-diagonal $\Q\H_t(n;k)$, $A$, there exists a solution $(\R,\C)$ of $P(A)$ in the following cases:
\begin{itemize}
\item[1)] if $nk$ is odd, $n\geq k\geq 3$, and $gcd(n,k-1)=1$,
\item[2)] if $nk$ is odd, $n\geq k$, and $3<k<200$,
\item[3)] if $nk$ is odd, $n\geq k\geq 3$, and $n\geq (k-2)(k-1)$.
\end{itemize}
In all these cases $A$ also admits a pair of compatible orderings.
\end{prop}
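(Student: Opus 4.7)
The plan is to exploit the regularity of cyclically $k$-diagonal arrays by choosing explicit orientations and analyzing the cycle structure of the resulting trajectory $L_{\R,\C}$. For such an array $A$, the filled cells of row $i$ occupy columns $\{i-k+1,i-k+2,\ldots,i\}$ modulo $n$, and symmetrically the filled cells of column $j$ occupy rows $\{j,j+1,\ldots,j+k-1\}$ modulo $n$. In particular, the move $(i,j)\mapsto(i',j')$ induced by $(\R,\C)$ has a rigid description in terms of additive shifts on $\Z_n$, which makes it amenable to a direct group-theoretic analysis.

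For case (1), I would first try the homogeneous orientation $r_i=c_j=1$ for all $i,j$. Starting at $(i,j)$, the row move sends us to $(i,j')$ where $j'$ is the next filled column after $j$; for all but one cell of the row this is $j\mapsto j+1$, while the exceptional ``end-of-band'' cell causes a cyclic jump by $-(k-1)$. Composing with the analogous column rule, the overall permutation of the filled cells can be encoded as a piecewise-linear map on $\Z_n\times\{1,\ldots,k\}$, and showing that this map is a single $nk$-cycle reduces to checking that the associated additive step generates an appropriate cyclic group. The coprimality condition $\gcd(n,k-1)=1$ is exactly what is needed to guarantee this generation property, while the parity hypothesis $nk$ odd rules out an order-$2$ obstruction.

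For case (3), where $n\geq(k-2)(k-1)$, the slack provided by a large $n$ accommodates a non-homogeneous construction: I would partition the $k$ diagonals into blocks, assign orientations so that the trajectory traverses one block, then transitions cleanly to the next, and verify that the bound $n\geq(k-2)(k-1)$ leaves enough room to avoid premature closure of the orbit. For case (2), where $3<k<200$ is bounded, the natural approach is a finite case-by-case verification: for each such $k$ one exhibits, perhaps by computer search, a universal family of orientations $(\R,\C)$ depending only on $k$ and on $n$ modulo a small integer, and then checks transitivity in each congruence class. The compatible orderings asserted in the last sentence of the proposition follow immediately from the remark preceding it, which shows that any solution of $P(A)$ canonically yields compatible orderings $\omega_r$ and $\omega_c$.

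The main obstacle is case (3): producing an explicit pair $(\R,\C)$ that works \emph{uniformly} in $n$ under the single hypothesis $n\geq(k-2)(k-1)$, and verifying that the resulting trajectory visits all $nk$ filled cells, is a delicate combinatorial argument that will almost certainly require a careful induction on $k$ or on the number of traversed diagonals, together with a precise book-keeping argument to see that the $(k-2)(k-1)$ threshold is the correct one.
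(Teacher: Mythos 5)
This proposition is not proved in the paper at all: it is imported verbatim from \cite{CDP} (``A tour problem on a toroidal board''), and the only argument the paper supplies is that citation, together with the remark preceding the statement which converts any solution of $P(A)$ into a pair of compatible orderings. That last conversion is the only part of your write-up that is actually complete; everything else is a programme rather than a proof, since for none of the three items do you exhibit orientations $(\R,\C)$ and verify that the trajectory $L_{\R,\C}$ is a single cycle of length $nk$.

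Moreover, your concrete starting point for case (1) fails. With the homogeneous choice $r_i=c_j=+1$ for all $i,j$, take a filled cell $(i,j)$ with $i-j\equiv d\pmod n$ and $1\le d\le k-1$: the row move sends it to $(i,j+1)$ (no wrap, since column $j+1$ is still filled in row $i$), and then the column move sends it to $(i+1,j+1)$ (no wrap, since $i-(j+1)=d-1\le k-2$). Hence the trajectory never leaves the diagonal it started on and closes after $n<nk$ steps, so the all-$+1$ orientation is never a solution of $P(A)$ for $k\ge 2$, regardless of $\gcd(n,k-1)$ and of the parity of $nk$; the coprimality hypothesis cannot play the role you assign to it without a genuinely non-homogeneous column vector $\C$, which you never specify. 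This is consistent with the paper's own remark after Proposition \ref{ElencoResults} that the solutions of \cite{CDP} have $\R=(1,\dots,1)$ but a $\C$ admitting no nontrivial period $\ell$ with $c^A_i=c^A_{i+\ell}$ for all $i$ --- in particular $\C$ is not constant. Cases (2) and (3) are left as intentions (``perhaps by computer search'', an induction you yourself describe as missing), so the threshold $n\ge(k-2)(k-1)$ and the range $3<k<200$ are not justified either. As it stands, the proposal neither reproduces the cited proof nor supplies an independent one.
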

\begin{rem}
Let $n$ and $k$ be positive integers that satisfy one of the conditions of Proposition \ref{ElencoResults} and let us assume exists a $\Q\H_t(n;k)$. Then we must have $t\not\equiv 0\pmod{4}$. Indeed $nk$ is odd and, for definition, $t|2nk$.
\end{rem}
We note that, if we fix the set $B$ of cells that belong to $k$ diagonals inside an $n\times n$ array and a set $\Omega$ of $nk$ distinct elements, we have $(nk)!$ ways to fill $B$ with the elements of $\Omega$. Hence we obtain the following corollary.
\begin{cor}\label{NumberArraysQH}
Let $t\not\equiv 0\pmod{4}$ be a fixed positive integer and let $n$ and $k$ be as in Proposition \ref{ElencoResults} and such that $t|2nk$ and let $B$ be a set of cells of an $n\times n$ array defined by the diagonals $D_1\cup D_2\cup \ldots \cup D_k$. Given $\Omega\subset \Z_{2nk+t}$ such that $[\pm x \mid x \in \Omega]$ contains each element of $\Z_{2nk+t}\setminus \frac{(2nk+t)}{t}\Z_{2nk+t}$ exactly once, we consider the family $\mathcal{F}$ of all the $\Q\H_t(n;k)$, $A$, such that:
\begin{itemize}
\item[a)] $skel(A)=B,$
\item[b)] $\E(A)=\Omega,$
\item[c)] the embedding obtained using $(\R,\C)$ via Theorem \ref{EmbeddingArco} has $\Z_{2nk+t}$ as its full automorphism group.
\end{itemize}
Then $$|\mathcal{F}|=\left(1-O\left(\frac{1}{nk}\right)\right)(nk)!$$\end{cor}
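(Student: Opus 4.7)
The statement is essentially an immediate combination of Proposition \ref{ElencoResults} and Theorem \ref{ProbabilityQuasiHeffter}, together with one small but crucial observation. First I would count the ambient family: denote by $\mathcal{A}$ the set of all $\Q\H_t(n;k)$ with $skel(A)=B$ and $\E(A)=\Omega$. Since $|B|=nk=|\Omega|$ and each element of $\Omega$ is placed in exactly one cell, there are $(nk)!$ such placements. Each of them is automatically a valid $\Q\H_t(n;k)$: condition (a$_1$) of Definition \ref{def:QH} holds because $B$ already has $k$ cells per row and $k$ cells per column, while (b$_1$) is encoded in the choice of $\Omega$. Hence $|\mathcal{A}|=(nk)!$.

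The bridging observation is that a solution $(\R,\C)$ of the Crazy Knight's Tour Problem depends only on the skeleton $B$, not on the entries: the sequence $L_{\R,\C}$ in the definition refers purely to the positions of filled cells. Under the hypotheses on $n$ and $k$, Proposition \ref{ElencoResults} applied to any single member of $\mathcal{A}$ produces such a $(\R,\C)$, and the Remark following Theorem \ref{EmbeddingArco} converts it into a pair of compatible orderings $\alpha_r,\alpha_c$ of $B$. Because $(\R,\C)$ is skeleton-only data, the same $\alpha_r,\alpha_c$ serve as compatible orderings simultaneously for every $A\in\mathcal{A}$, so the induced $\omega_r,\omega_c$ are well defined on each $A$.

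With this choice of $(B,\alpha_r,\alpha_c)$ fixed, the hypotheses of Theorem \ref{ProbabilityQuasiHeffter} are satisfied for the uniform distribution on $\mathcal{A}$. That theorem yields, with $t$ held fixed, that the probability that the Archdeacon embedding $\Pi=\Pi(A)$ of $K_{\frac{2nk+t}{t}\times t}$ has full automorphism group strictly larger than $\Z_{2nk+t}$ is $O(1/(nk))$. Multiplying the complementary probability by $|\mathcal{A}|=(nk)!$ gives $|\mathcal{F}|=(1-O(1/(nk)))(nk)!$, as claimed. There is no genuine obstacle in the proof; the only point that deserves care is precisely the skeleton-only nature of the Crazy Knight's Tour solution, which is what allows a single fixed pair of compatible orderings to be coupled with a randomly chosen filling.
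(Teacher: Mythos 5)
Your proposal is correct and follows essentially the same route as the paper, which states the corollary as an immediate consequence of the observation that there are $(nk)!$ fillings of the fixed skeleton $B$ with $\Omega$, of Proposition \ref{ElencoResults} providing a solution $(\R,\C)$ of $P(A)$ (which, as you rightly stress, depends only on the skeleton and hence yields one pair of compatible orderings valid for every filling), and of the probability bound of Theorem \ref{ProbabilityQuasiHeffter}. Your explicit remark on the skeleton-only nature of the Crazy Knight's Tour solution is exactly the implicit bridge the paper relies on.
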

Moreover, it follows from Lemma \ref{ProbabilityNHQH} and Theorem \ref{ProbabilityNonZero}, that:
\begin{cor}\label{NumberArraysNH}
Let $t\not\equiv 0\pmod{4}$ be a fixed positive integer and let $n$ and $k$ be as in Proposition \ref{ElencoResults} and such that $t|2nk$ and let $B$ be a set of cells of an $n\times n$ array defined by the diagonals $D_1\cup D_2\cup \ldots \cup D_k$. Given $\Omega\subset \Z_{2nk+t}$ such that $[\pm x \mid x \in \Omega]$ contains each element of $\Z_{2nk+t}\setminus \frac{(2nk+t)}{t}\Z_{2nk+t}$ exactly once, we consider the family $\mathcal{F}$ of all the $\N\H_t(n;k)$, $A$, such that:
\begin{itemize}
\item[a)] $skel(A)=B,$
\item[b)] $\E(A)=\Omega,$
\item[c)] the embedding obtained using $(\R,\C)$ via Theorem \ref{EmbeddingArco} has $\Z_{2nk+t}$ as its full automorphism group.
\end{itemize}
Then the cardinality of $\mathcal{F}$ is at least $$\left(1-\left(\frac{2n}{nk-(k-1)}\right)\right)\left(1-O\left(\frac{1}{nk}\right)\right)(nk)!=\left(1-O\left(\frac{1}{k}\right)\right)\left(1-O\left(\frac{1}{nk}\right)\right)(nk)!$$
where the term $\left(1-O\left(\frac{1}{k}\right)\right)$ is strictly positive for any $k\geq 3$.
\end{cor}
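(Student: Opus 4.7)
The plan is to derive Corollary \ref{NumberArraysNH} as a direct combination of three ingredients already established: Proposition \ref{ElencoResults}, Lemma \ref{ProbabilityNHQH}, and Theorem \ref{ProbabilityNonZero}. The setup mirrors that of Corollary \ref{NumberArraysQH}, but now we need to track which quasi-Heffter arrays are in fact non-zero sum Heffter arrays, and then intersect with the ``good automorphism group'' event.

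First, I fix the skeleton $B=D_1\cup\cdots\cup D_k$ and support $\Omega$ as in the statement. Since the hypotheses on $n,k$ match the assumptions of Proposition \ref{ElencoResults}, every array $A$ with $skel(A)=B$ and $\E(A)=\Omega$ that is a $\Q\H_t(n;k)$ admits a solution $(\R,\C)$ of $P(A)$, hence (by the remark following Theorem \ref{EmbeddingArco}) a pair of compatible orderings $\alpha_r,\alpha_c$ depending only on $B$, and an Archdeacon embedding of $K_{\frac{2nk+t}{t}\times t}$. The family $\mathcal{A}$ of all such $\Q\H_t(n;k)$ has cardinality exactly $(nk)!$, because once $B$ and $\Omega$ are fixed the only freedom is to bijectively place the $nk$ elements of $\Omega$ into the $nk$ cells of $B$, and \emph{any} such placement satisfies conditions $(\mathrm{a}_1)$ and $(\mathrm{b}_1)$ of Definition \ref{def:QH}.

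Let $\mathcal{D}\subseteq\mathcal{A}$ be the subfamily of those arrays that are actually $\N\H_t(n;k)$, and let $\mathcal{F}\subseteq\mathcal{D}$ be the subfamily whose induced Archdeacon embedding has full automorphism group $\Z_{2nk+t}$. Applying Lemma \ref{ProbabilityNHQH} with $m=n$ and $h=k$, a uniformly random element of $\mathcal{A}$ lies in $\mathcal{D}$ with probability at least $1-\frac{2n}{nk-(k-1)}$, so
\[
|\mathcal{D}|\;\geq\;\left(1-\frac{2n}{nk-(k-1)}\right)(nk)!.
\]
Next, the set $\mathcal{D}$ coincides with the family denoted $\mathcal{D}$ in Theorem \ref{ProbabilityNonZero}, and that theorem says a uniformly random element of $\mathcal{D}$ fails to have full automorphism group equal to $\Z_{2nk+t}$ with probability $O\!\left(\tfrac{1}{nk}\right)$. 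Consequently
\[
|\mathcal{F}|\;\geq\;\left(1-O\!\left(\tfrac{1}{nk}\right)\right)|\mathcal{D}|\;\geq\;\left(1-O\!\left(\tfrac{1}{nk}\right)\right)\left(1-\frac{2n}{nk-(k-1)}\right)(nk)!,
\]
which is the claimed bound. Since $k\geq 3$, the factor $1-\frac{2n}{nk-(k-1)}=1-O(1/k)$ is strictly positive, so the lower bound is non-trivial.

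The substance of the argument is already in place from the previous sections; there is essentially no new technical obstacle. The only point that requires a little care is verifying that Theorem \ref{ProbabilityNonZero} applies with the \emph{same} skeleton $B$ and support $\Omega$ used in Corollary \ref{NumberArraysQH}, and that the compatible orderings $\alpha_r,\alpha_c$ produced by Proposition \ref{ElencoResults} depend only on $B$ (and on the choices of $\R,\C$), not on the entries of $A$; this is what allows us to speak of a single family $\mathcal{D}$ to which Theorem \ref{ProbabilityNonZero} is directly applied, and hence to multiply the two probability estimates.
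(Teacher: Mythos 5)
Your proposal is correct and follows exactly the route the paper intends: the paper states this corollary as an immediate consequence of Lemma \ref{ProbabilityNHQH} and Theorem \ref{ProbabilityNonZero}, and your chaining of the $(nk)!$ count of fillings with the two probability bounds (applied with $m=n$, $h=k$, on the fixed skeleton $B$ and support $\Omega$, with orderings determined by $(\R,\C)$ independently of the entries) is precisely the intended argument.
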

Note that we do not know, a priori, that the embeddings here defined are distinct. In the following, we will prove that these embeddings are indeed different from each other. Because of Proposition $2.16$ of \cite{CostaPasotti2}, we have that:
\begin{prop}\label{differentEmbeddings1}
Let $A$ and $B$ be $\Q\H_t(m,n;h,k)$ such that $\E(A)=\E(B)$. Assume that both $A$ and $B$ admit compatible orderings and denote them, respectively, by $(\omega^A_r,\omega^A_c)$ and by $(\omega^B_r,\omega^B_c)$. Then $(\omega^A_r,\omega^A_c)$ and $(\omega^B_r,\omega^B_c)$ determine (via Theorem \ref{ArchdeaconEmbedding}) the same embedding of $K_{\frac{2nk+t}{t}\times t}$ if and only if $\omega^A_r=\omega^B_r$ and $\omega^A_c=\omega^B_c$.
\end{prop}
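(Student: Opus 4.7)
The plan is to prove the two directions separately, with the reverse implication being immediate and the forward implication following from unwinding the definition of the Archdeacon embedding at a single vertex.

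For the reverse direction, I would simply observe that the map $\rho_0$ in equation \refeq{ArchEmb} depends only on the pair of orderings $(\omega_r, \omega_c)$ and on the set $\E(A)$; since $\E(A) = \E(B)$ by hypothesis and the orderings coincide by assumption, we have $\rho_0^A = \rho_0^B$, and then formula \refeq{ArchRho} forces the full rotations $\rho^A$ and $\rho^B$ to coincide, giving the same combinatorial embedding.

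For the forward direction, I would argue that the two embeddings being the same means the rotations $\rho^A$ and $\rho^B$ are identical as maps on the set of oriented edges of $K_{\frac{v}{t}\times t}$. Restricting to oriented edges based at the vertex $0$, whose other endpoints range over $N(K_{\frac{v}{t}\times t},0) = \pm\E(A) = \pm\E(B)$, equation \refeq{ArchRho} gives $\rho^A(0,a) = (0,\rho_0^A(a))$ and similarly for $B$; equality of these pairs for every $a$ forces $\rho_0^A = \rho_0^B$ pointwise on $\pm\E(A)$.

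The conclusion then comes from splitting by cases in the defining formula \refeq{ArchEmb}. For $a\in\E(A)$ we have $\rho_0^A(a) = -\omega_r^A(a)$ and $\rho_0^B(a) = -\omega_r^B(a)$, so $\omega_r^A(a) = \omega_r^B(a)$; for $a\in -\E(A)$ we have $\rho_0^A(a) = \omega_c^A(-a)$ and $\rho_0^B(a) = \omega_c^B(-a)$, so letting $b = -a$ run over $\E(A)$ yields $\omega_c^A(b)=\omega_c^B(b)$. Since $\omega_r$ and $\omega_c$ are permutations of $\E(A)=\E(B)$, this gives $\omega_r^A = \omega_r^B$ and $\omega_c^A = \omega_c^B$, as required. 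There is no real obstacle here: the only subtlety is being careful that \emph{same embedding} is being interpreted as equality of combinatorial embeddings $(\Gamma,\rho)$ rather than isomorphism, but this is exactly the setup of Proposition 2.16 of \cite{CostaPasotti2} being cited.
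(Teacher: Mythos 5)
Your proposal is correct. Note, though, that the paper does not actually prove this proposition: it is stated as an immediate consequence of Proposition 2.16 of \cite{CostaPasotti2}, so there is no internal argument to compare against. Your blind proof supplies exactly the direct verification one would expect to lie behind that citation: since $\rho_0$ in \eqref{ArchEmb} is built solely from $\E(A)$ and the pair $(\omega_r,\omega_c)$, equality of the orderings (together with $\E(A)=\E(B)$, which also makes the case split $\E(A)$ versus $-\E(A)$ the same for both arrays) gives $\rho^A_0=\rho^B_0$ and hence $\rho^A=\rho^B$ by \eqref{ArchRho}; conversely, equality of the combinatorial embeddings means equality of the rotations, and restricting to the oriented edges at $0$, whose second coordinates range over $N(K_{\frac{v}{t}\times t},0)=\pm\E(A)$, recovers $\rho^A_0=\rho^B_0$, from which the two branches of \eqref{ArchEmb} give $\omega^A_r=\omega^B_r$ on $\E(A)$ and $\omega^A_c=\omega^B_c$ on $\E(A)$. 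Your closing remark is the right one: the statement concerns equality of embeddings in the combinatorial sense of Definition \ref{DefEmbeddings} (equality of the rotation maps), not isomorphism, and with that reading the argument is complete; it is also worth observing, as implicitly used in your case split, that condition $(\rm{b_1})$ of Definition \ref{def:QH} guarantees $\E(A)\cap(-\E(A))=\emptyset$, so the two cases in \eqref{ArchEmb} are mutually exclusive and the comparison of the two formulas is legitimate.
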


\begin{prop}\label{differentEmbeddings2}
Let $A$ and $B$ be cyclically $k$-diagonals $\Q\H_t(n;k)$ such that $\E(A)=\E(B)$, $skel(A)=skel(B)=D_1\cup D_2\cup \ldots \cup D_k$, and $n>k$. Assume that both $P(A)$ and $P(B)$ admit solutions denoted, respectively, by $(\R_A,\C_A)$ and by $(\R_B,\C_B)$ where $\R_A=\R_B=(1,\dots,1)$, $C_A=(c^A_1,\dots,c^A_n)$ and $C_B=(c^B_1,\dots,c^B_n)$.

Then $(\R_A,\C_A)$ and $(\R_B,\C_B)$ determine (via Theorem \ref{ArchdeaconEmbedding} or, equivalently, Theorem \ref{EmbeddingArco}) the same embedding of $K_{\frac{2nk+t}{t}\times t}$ if and only if there exists $\ell\in \{0,\dots,n-1\}$ such that:
\begin{itemize}
\item[a)] denoted by $a_{i,j}$ (resp. $b_{i,j}$) the element in position $(i,j)$ of $A$ (resp. $B$), we have $a_{i,j}=b_{i+\ell,j+\ell}$,
\item[b)] $c^A_i=c^B_{i+\ell}$,
\end{itemize}
where, in both conditions, the indices are considered modulo $n$.
\end{prop}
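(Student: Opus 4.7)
My plan is to prove the two directions of the equivalence separately.

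For the \emph{if} direction, I will assume $a_{i,j} = b_{i+\ell, j+\ell}$ and $c_i^A = c_{i+\ell}^B$ for some $\ell$, and verify that $\omega_r^A = \omega_r^B$ and $\omega_c^A = \omega_c^B$; the conclusion then follows from Proposition~\ref{differentEmbeddings1}. Since $\R_A = \R_B = (1,\dots,1)$, condition (a) makes the row cycles $\omega_{R_i}^A = (a_{i, i-k+1}, \ldots, a_{i, i})$ and $\omega_{R_{i+\ell}}^B = (b_{i+\ell, i+\ell-k+1}, \ldots, b_{i+\ell, i+\ell})$ coincide term by term; since the row supports partition $\Omega := \E(A) = \E(B)$, taking products yields $\omega_r^A = \omega_r^B$. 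Conditions (a) and (b) similarly align $\omega_{C_j}^A$ with $\omega_{C_{j+\ell}}^B$, giving $\omega_c^A = \omega_c^B$.

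For the \emph{only if} direction, Proposition~\ref{differentEmbeddings1} first gives $\omega_r^A = \omega_r^B$ and $\omega_c^A = \omega_c^B$. Decomposing these permutations into disjoint $k$-cycles, one per row and one per column, produces bijections $\pi$ and $\pi'$ of $\{1, \ldots, n\}$ such that row $i$ of $A$ has the same elements and the same row cycle as row $\pi(i)$ of $B$, and analogously for columns via $\pi'$. The induced map $\Phi(i,j) := (\pi(i), \pi'(j))$ is then a bijection of $\mathrm{skel}(A) = D_1 \cup \cdots \cup D_k$ onto itself: it sends the position of each $x \in \Omega$ in $A$ to its position in $B$.

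The crux will be to show $\pi(i) = i + \ell$ and $\pi'(j) = j + \ell$ for a common shift $\ell$. For each $j$, the set $S_j = \{j, j+1, \ldots, j+k-1\}$ of rows meeting column $j$ is mapped by $\pi$ onto $\{\pi'(j), \ldots, \pi'(j)+k-1\}$. The symmetric difference $\pi(S_j) \triangle \pi(S_{j+1}) = \pi(\{j, j+k\})$ has size $2$; since $n > k$, two length-$k$ intervals in $\Z_n$ with symmetric difference $2$ must differ by a shift of $\pm 1$, giving $\pi'(j+1) - \pi'(j) \in \{\pm 1\}$. Comparing with $\pi(S_{j+2})$ (symmetric difference of size $4$) forces this sign to be constant, so $\pi'(j) = \pi'(0) + \varepsilon j$ for some $\varepsilon \in \{\pm 1\}$, and symmetrically $\pi(i) = \pi(0) + \varepsilon' i$. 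Skeleton preservation excludes $\varepsilon \neq \varepsilon'$: in that case $\pi(i) - \pi'(j)$ is a translate of $\pm(i+j)$, and $i+j$ covers all of $\Z_n$ as $(i,j)$ varies over the skeleton (for $k\geq 2$), contradicting confinement to $\{0, \ldots, k-1\}$ when $n > k$. When $\varepsilon = \varepsilon' = 1$, skeleton preservation forces the length-$k$ interval $\{\pi(0) - \pi'(0), \ldots, \pi(0) - \pi'(0) + k - 1\}$ to equal $\{0, \ldots, k-1\}$ in $\Z_n$, whence $\pi(0) = \pi'(0) =: \ell$ and (a) follows. The remaining case $\varepsilon = \varepsilon' = -1$ would force each row cycle $\omega_{R_i}^A$ (as a cyclic permutation) to equal its own inverse, which is impossible for $k \geq 3$ since the entries in any row are distinct. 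Finally, (b) is obtained by comparing $\omega_{C_j}^A$ with $\omega_{C_{j+\ell}}^B$: after substituting (a) this becomes an equality of two cycles on the same sequence $(a_{j,j}, a_{j+1,j}, \ldots, a_{j+k-1,j})$, which for $k \geq 3$ forces $c_j^A = c_{j+\ell}^B$.

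The main obstacle will be the case analysis above: pinning down $\pi$ and $\pi'$ as translations by the \emph{same} shift, and ruling out the reflection $\varepsilon = -1$. Both steps rely on the cyclic diagonal structure of the skeleton, the hypothesis $n > k$, and the assumption $k \geq 3$ that is inherited from the setting of Proposition~\ref{ElencoResults}.
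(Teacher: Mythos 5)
Your proposal is correct in substance but follows a genuinely different route from the paper in the ``only if'' direction. The paper also starts from Proposition \ref{differentEmbeddings1} ($\omega^A_r=\omega^B_r$, $\omega^A_c=\omega^B_c$), but then normalizes by a diagonal translation so that the element $1$ lies in the first column of both arrays, proves by induction along consecutive columns (using $\R_A=\R_B=(1,\dots,1)$ and $k\geq 3$ to pick an entry off the main diagonal cell) that $\E(C^A_j)=\E(C^B_j)$ for every $j$, and finally matches row indices to conclude $A=B$ and $\C_A=\C_B$ after the normalization, the shift $\ell$ being absorbed in the translation. You instead extract row and column correspondences $\pi,\pi'$ from the uniqueness of the disjoint cycle decomposition of $\omega_r$ and $\omega_c$, and then pin them down by a combinatorial-geometry argument on the cyclic-interval skeleton: symmetric differences of column supports force $\pi,\pi'$ to be cyclic isometries, skeleton preservation forces a common translation $\ell$ and excludes mixed signs, and the reflection case is killed because a $k$-cycle cannot equal its inverse for $k\geq 3$. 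Your version makes explicit exactly where the diagonal structure and $n>k$ enter, while the paper's induction exploits the connectivity coming from the solution of $P(A)$ and is shorter once set up. Two small caveats you should patch: the hypothesis $k\geq 3$ is not ``inherited from Proposition \ref{ElencoResults}'' (which is not assumed here) but must be derived, as the paper does, from the existence of solutions of $P(A)$ and $P(B)$ via Corollary 2.8 of \cite{CDP}, which also gives $n,k$ odd; and your step ``symmetric difference $2$ implies shift $\pm 1$'' fails for $n=k+1$ (there any two length-$k$ intervals of $\Z_n$ have symmetric difference $2$), so you need $n\geq k+2$, which again follows from $n>k$ with $n,k$ odd.
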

\begin{proof}
Let us assume that conditions $a$ and $b$ hold for some $\ell\in \{0,\dots,n-1\}$. Then, denoted by $\omega_r^A$ and $\omega_c^A$ the compatible orderings of $A$ induced by $(\R_A,\C_A)$ and by $\omega_r^B$ and $\omega_c^B$ the compatible orderings of $B$ induced by $(\R_B,\C_B)$, we have that $\omega^A_r=\omega^B_r$ and $\omega^A_c=\omega^B_c$. Therefore, due to Proposition \ref{differentEmbeddings1}, $(\R_A,\C_A)$ and $(\R_B,\C_B)$ determine (via Theorem \ref{ArchdeaconEmbedding}) the same embeddings of $K_{\frac{2nk+t}{t}\times t}$.

In the following we will assume that $(\R_A,\C_A)$ and $(\R_B,\C_B)$ determine (via Theorem \ref{EmbeddingArco}) the same embeddings of $K_{\frac{2nk+t}{t}\times t}$, and we need to prove that conditions $a$ and $b$ hold true.
Since $\E(A)=\E(B)$, we can assume, without loss of generality that $1\in \E(A)=\E(B)$.
Moreover, up to a translation of the elements of $A$ (i.e. considering the array $A'$ defined by $a'_{i,j}=a_{i-p,j-p}$ for some $p\in \{0,\dots,n-1\}$, where the index are taken modulo $n$) and of $B$, we can assume that $1$ is in the first column in both $A$ and $B$, that is $1=a_{i,1}=b_{i',1}$.

Now we prove, inductively, that $\E(C^A_j)=\E(C^B_j)$ for any $j\in \{1,\dots,n\}$.

\textbf{BASE STEP}: We note that $$\E(C^A_1)=\{1,\omega^A_c(1),(\omega^A_c)^2(1),\dots,(\omega^A_c)^{k-1}(1)\}$$ and
$$\E(C^B_1)=\{1,\omega^B_c(1),(\omega^B_c)^2(1),\dots,(\omega^B_c)^{k-1}(1)\}.$$ Due to Proposition \ref{differentEmbeddings1}, we also have that $\omega^A_c=\omega^B_c$ and hence $\E(C^A_1)=\E(C^B_1)$.

\textbf{INDUCTIVE STEP}: Here we assume that $\E(C^A_j)=\E(C^B_j)$ for any $j\in \{1,\dots,m\}$ and we prove that $\E(C^A_{m+1})=\E(C^B_{m+1})$ where $m+1\leq n$.
We note that, since $P(A)$ and $P(B)$ admit solutions, due to Corollary 2.8 of \cite{CDP}, $k$ is odd and it must be at least $3$ since when $k=1$ the maps $\omega_r^A$, $\omega_c^A$, $\omega_r^B$, $\omega_c^B$ all coincide all with the identity and can not define compatible orderings.
Since $\E(C^A_m)=\E(C^B_m)$ and $k\geq 3$, there is $\bar{x} \in \E(C^A_m)=\E(C^B_m)$ such that $\bar{x}\not \in \{a_{m,m},b_{m,m}\}$ and hence $\bar{x}$ is not in position $(m,m)$ in both the arrays $A$ and $B$. Since $\R_A=(1,\dots,1)=\R_B$, we have that $\omega^A_r(\bar{x})\in \E(C^A_{m+1})$ and $\omega^B_r(\bar{x})\in \E(C^B_{m+1})$. Moreover, because of Proposition \ref{differentEmbeddings1}, we also have that $\omega^A_r=\omega^B_r$ and hence $\omega^A_r(\bar{x})=\omega^B_r(\bar{x})=\bar{y}\in \E(C^A_{m+1})\cap \E(C^B_{m+1})$.

Here we note that $$\E(C^A_{m+1})=\{\bar{y},\omega^A_c(\bar{y}),(\omega^A_c)^2(\bar{y}),\dots,(\omega^A_c)^{k-1}(\bar{y})\}$$ and
$$\E(C^B_{m+1})=\{\bar{y},\omega^B_c(\bar{y}),(\omega^B_c)^2(\bar{y}),\dots,(\omega^B_c)^{k-1}(\bar{y})\}.$$ Due again to Proposition \ref{differentEmbeddings1}, we also have that $\omega^A_c=\omega^B_c$ and hence $\E(C^A_{m+1})=\E(C^B_{m+1})$.

It follows that $\E(C^A_j)=\E(C^B_j)$ for any $j\in \{1,\dots,n\}$.

Now we consider $\bar{z}\in \E(A)=\E(B)$. Since the columns of $A$ and $B$, considered as sets of elements coincide, we must have that $\bar{z}=a_{i,j}$ and $\bar{z}=b_{i',j}$ for some $i,i',j\in \{1,\dots,n\}$.
We note that
$$\E(R^A_{i})=\{\bar{z},\omega^A_r(\bar{z}),(\omega^A_r)^2(\bar{z}),\dots,(\omega^A_c)^{k-1}(\bar{z})\}$$ and
$$\E(R^B_{i'})=\{\bar{z},\omega^B_c(\bar{z}),(\omega^B_c)^2(\bar{z}),\dots,(\omega^B_c)^{k-1}(\bar{z})\}.$$
However, due to Proposition \ref{differentEmbeddings1}, we have that $\omega^A_r=\omega^B_r$ and hence $\E(R^A_{i})=\E(R^B_{i'})$.
Since $\E(C^A_j)=\E(C^B_j)$ for any $j\in \{1,\dots,n\}$ and every elements of $\E(A)$ and $\E(B)$ appears only once in $A$ and $B$, this is possible only if $R^{A}_i$ and $R^{B}_{i'}$ intersect the same columns, that means $i=i'$. Due to the arbitrariness of $\bar{z}$ we have proved that, assuming $1$ belongs to the first column of both $A$ and $B$ and that $(\R_A,\C_A)$ and $(\R_B,\C_B)$ determine (via Theorem \ref{EmbeddingArco}) the same embeddings of $K_{\frac{2nk+t}{t}\times t}$, the array $A$ must be equal to the array $B$.
Moreover, it follows from Proposition \ref{differentEmbeddings1} that $\omega^A_c=\omega^B_c$ and hence, under the same assumptions, $c^A_i=c^B_{i}$.

Equivalently we have that conditions $a$ and $b$ hold assuming that $(\R_A,\C_A)$ and $(\R_B,\C_B)$ determine (via Theorem \ref{EmbeddingArco}) the same embedding of $K_{\frac{2nk+t}{t}\times t}$.
\end{proof}
We remark that the solutions $(\R_A,\C_A)$ of the problems $P(A)$ listed in Proposition \ref{ElencoResults}, have all
$\R_A=(1,\dots,1)$. As usual, we set $\C_A=(c^A_1,\dots,c^A_n)$. Then the solutions at points 1 and 3 of Proposition \ref{ElencoResults} (and the ones at point 2 when $n$ is big enough) are such that there is no $\ell\not=0$ for which $c^A_i=c^A_{i+\ell}$ for all $i \in \{1,\dots,n\}$ where the sum is considered modulo $n$. We also remark that, here, we are interested in asymptotic estimation and, when $3<k<200$, requiring that $n$ is big enough is equivalent to requiring that $nk$ is big enough.

Therefore, applying Proposition \ref{differentEmbeddings2} to the Corollaries \ref{NumberArraysQH} and \ref{NumberArraysNH}, we obtain the following:
\begin{cor}\label{NumberEmbeddingsQH}
Let $t\not\equiv 0\pmod{4}$ be a fixed positive integer and let $n$ and $k<n$ be as in Proposition \ref{ElencoResults} and such that $t|2nk$. Then the number of distinct biembeddings $\Pi$ of $K_{\frac{2nk+t}{t}\times t}$ such that:
\begin{itemize}
\item[a)] $Aut(\Pi)=\Z_{2nk+t}$,
\item[b)] the faces determined by $\Pi$ have lengths multiples of $k$,
\end{itemize}
is at least
$$\left(1-O\left(\frac{1}{nk}\right)\right)(nk)!$$\end{cor}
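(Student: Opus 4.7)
The plan is to use Corollary \ref{NumberArraysQH} to produce a large family of quasi-Heffter arrays, each yielding an Archdeacon biembedding with the required properties, and then to show that the map from arrays to biembeddings is injective; the lower bound then follows directly.

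First I would fix the skeleton $B = D_1\cup D_2\cup\cdots\cup D_k$ and any admissible support $\Omega\subset \Z_{2nk+t}$, and apply Corollary \ref{NumberArraysQH} to obtain a family $\mathcal{F}$ of $\Q\H_t(n;k)$'s of cardinality $(1-O(1/(nk)))(nk)!$. Each $A\in\mathcal{F}$ satisfies, via the solution $(\R,\C)$ provided by Proposition \ref{ElencoResults}, the hypotheses of Theorem \ref{EmbeddingArco}. The resulting biembedding $\Pi_A$ of $K_{\frac{2nk+t}{t}\times t}$ automatically has face lengths that are multiples of $k$ (by Theorem \ref{EmbeddingArco}) and full automorphism group $\Z_{2nk+t}$ (by condition (c) of Corollary \ref{NumberArraysQH}).

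It remains to prove that the assignment $A\mapsto\Pi_A$ is injective on $\mathcal{F}$. The key observation is that $(\R,\C)$ depends only on the skeleton $B$, so $(\R_A,\C_A)=(\R_B,\C_B)=(\R,\C)$ for every $A,B\in\mathcal{F}$. By Proposition \ref{differentEmbeddings2}, $\Pi_A=\Pi_B$ then forces the existence of some $\ell\in\{0,\dots,n-1\}$ with $a_{i,j}=b_{i+\ell,j+\ell}$ and $c_i=c_{i+\ell}$ for all $i$. The main obstacle is to exclude $\ell\neq 0$: here I would invoke the no-nontrivial-period property of $\C$ recorded in the paragraph preceding the corollary, which is valid at points (1) and (3) of Proposition \ref{ElencoResults} and at point (2) for $n$ large enough, and which asserts that no shift $\ell\neq 0$ can satisfy $c_i=c_{i+\ell}$ for every $i$. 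Hence $\ell=0$, and then condition (a) of Proposition \ref{differentEmbeddings2} immediately gives $A=B$. Combining this injectivity with $|\mathcal{F}|=(1-O(1/(nk)))(nk)!$ yields the claimed lower bound on the number of distinct biembeddings.
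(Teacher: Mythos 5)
Your proposal is correct and follows essentially the same route as the paper: apply Corollary \ref{NumberArraysQH} to get the family of arrays, note that all of them share the same solution $(\R,\C)$ with $\R=(1,\dots,1)$ and aperiodic $\C$, and use Proposition \ref{differentEmbeddings2} to force $\ell=0$ and hence injectivity of $A\mapsto\Pi_A$, which yields the stated lower bound.
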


\begin{cor}\label{NumberEmbeddingsNH}
Let $t\not\equiv 0\pmod{4}$ be a fixed positive integer and let $n$ and $k<n$ be as in Proposition \ref{ElencoResults} and such that $t|2nk$. Then the number of distinct biembeddings $\Pi$ of $K_{\frac{2nk+t}{t}\times t}$ such that:
\begin{itemize}
\item[a)] $Aut(\Pi)=\Z_{2nk+t}$,
\item[b)] the faces determined by $\Pi$ have lengths multiples of $k$ strictly greater than $k$,
\end{itemize}
is at least
$$\left(1-\left(\frac{2n}{nk-(k-1)}\right)\right)\left(1-O\left(\frac{1}{nk}\right)\right)(nk)!=\left(1-O\left(\frac{1}{k}\right)\right)\left(1-O\left(\frac{1}{nk}\right)\right)(nk)!$$
where the term $\left(1-O\left(\frac{1}{k}\right)\right)$ is strictly positive for any $k\geq 3$.

Moreover, if $v=2nk+t$ is a prime (that can occur only if $t=1$), all these biembeddings of $K_{v}$ are $kv$-gonal.\end{cor}
\section{Non-isomorphic embeddings}
The goal of this section is to determine a lower bound on the number of non-isomorphic Archdeacon embeddings of $K_{\frac{2nk+t}{t}\times t}$.
Following the procedure of \cite{CostaPasotti2}, it is possible to provide a first bound that involves the cardinality of $Aut_0^+$. Even though we will then improve this result, we believe it is interesting to explicitly report it since this has been the starting point of the present investigation. We will first recall, without proving, Proposition 3.4 of \cite{CostaPasotti2} and then we will state some bounds as its consequences. In the second part of this section, we will provide an improvement of these results whose proof will be completely independent.
\begin{prop}[Proposition 3.4 \cite{CostaPasotti2}]\label{upperboundFamily}
Let $\mathcal{F}=\{\Pi_{\alpha}: \alpha \in \mathcal{A}\}$ be a family of $\mathbb{Z}_v$-regular distinct embeddings of $K_{m\times t}$ where $v=mt$ and $m\geq 2$. Then, if $\Pi_{\alpha}$ is isomorphic to $\Pi_0$ for any $\alpha \in \mathcal{A}$, we have that
$$|\mathcal{F}|\leq 2|Aut_0(\Pi_0)|\cdot |N(K_{m\times t},0)|\leq 4|N(K_{m\times t},0)|^2=4((m-1)t)^2.$$
Moreover, if for any $\alpha\in \mathcal{A}$ and any $g\in \mathbb{Z}_v$, the translation $\tau_g$ belongs to $Aut^+(\Pi_{\alpha})$,
then
$$|\mathcal{F}|\leq 2|Aut_0^+(\Pi_0)|\cdot |N(K_{m\times t},0)|\leq 2|N(K_{m\times t},0)|^2=2((m-1)t)^2.$$
\end{prop}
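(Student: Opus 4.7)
The plan is to construct an injection $\Phi$ from $\mathcal{F}$ into a set of cardinality at most $2|Aut_0(\Pi_0)|\cdot|N(K_{m\times t},0)|$. For each $\Pi_\alpha\in\mathcal{F}$ I fix an embedding isomorphism $\sigma_\alpha\colon\Pi_\alpha\to\Pi_0$ and, since $\Pi_0$ is $\mathbb{Z}_v$-regular so that the translation $\tau_{-\sigma_\alpha(0)}\in Aut(\Pi_0)$, I pre-compose to arrange $\sigma_\alpha(0)=0$. Let $\epsilon_\alpha\in\{+,-\}$ record the orientation of $\sigma_\alpha$ and set $N:=N(K_{m\times t},0)$. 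The crucial structural input is that, because $\Pi_\alpha$ is $\mathbb{Z}_v$-regular, the rotation $\rho_\alpha$ is entirely determined by its local action at $0$, and on $N$ we have the identity $(\rho_\alpha)_0=\sigma_\alpha^{-1}(\rho_0)_0^{\epsilon_\alpha}\sigma_\alpha$; hence $\Pi_\alpha$ is uniquely recovered from the restriction $\sigma_\alpha|_N$ together with $\epsilon_\alpha$.

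I now encode this data compactly. Fix a base neighbor $y_0\in N$ and define $\Phi(\Pi_\alpha):=\bigl(\sigma_\alpha(y_0),\epsilon_\alpha,[\sigma_\alpha]\bigr)$, where $[\sigma_\alpha]$ denotes the class of $\sigma_\alpha$ modulo left-multiplication by $Aut_0(\Pi_0)$. Any two choices of $\sigma_\alpha$ realizing the same embedding $\Pi_\alpha$ differ by an element of $Aut_0(\Pi_0)$, so $[\sigma_\alpha]$ is an invariant of $\Pi_\alpha$. By Propositions \ref{CondizioneLocale1} and \ref{CondizioneLocale2}, every nonidentity $\phi\in Aut_0^\pm(\Pi_0)$ is a nontrivial shift $\rho_0^\ell$ (or a reflection of the cycle $(\rho_0)_0$), so the action of $Aut_0(\Pi_0)$ on $N$ is free; consequently the value $\sigma_\alpha(y_0)$ is itself determined after the class is fixed. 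The triple $\Phi(\Pi_\alpha)$ therefore lies in a set of size $|N|\cdot 2\cdot |Aut_0(\Pi_0)|$.

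The heart of the argument, and the main obstacle, is to verify that $\Phi$ is injective: one must recover $\sigma_\alpha|_N$ from the triple so that the recovery formula $(\rho_\alpha)_0=\sigma_\alpha^{-1}(\rho_0)_0^{\epsilon_\alpha}\sigma_\alpha$ pins down $\Pi_\alpha$. This reconstruction relies again on the rigidity of Propositions \ref{CondizioneLocale1}-\ref{CondizioneLocale2}: within a fixed coset, once we know where a single neighbor is sent, the embedding isomorphism is determined on all of $N$. Injectivity then immediately gives $|\mathcal{F}|\leq 2|Aut_0(\Pi_0)|\cdot|N|$, and the further bound $|Aut_0(\Pi_0)|\leq 2|N|$ is the content of Propositions \ref{CondizioneLocale1}-\ref{CondizioneLocale2}, since each of $Aut_0^+(\Pi_0)$ and $Aut_0^-(\Pi_0)$ is parametrized by a single index $\ell\in\{1,\ldots,|N|\}$.

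For the refined bound under the assumption that $\tau_g\in Aut^+(\Pi_\alpha)$ for all $\alpha$ and $g$, I observe that the conjugate subgroup $\sigma_\alpha\mathbb{Z}_v\sigma_\alpha^{-1}$ necessarily sits inside $Aut^+(\Pi_0)$, so the admissible classes $[\sigma_\alpha]$ are classes modulo $Aut_0^+(\Pi_0)$ rather than $Aut_0(\Pi_0)$. Rerunning the same injection argument with $Aut_0^+$ in place of $Aut_0$ yields the sharper conclusion $|\mathcal{F}|\leq 2|Aut_0^+(\Pi_0)|\cdot|N|\leq 2|N|^2$.
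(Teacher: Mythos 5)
The paper never proves this statement: it is imported verbatim from Proposition 3.4 of \cite{CostaPasotti2} and explicitly recalled ``without proving'' it, so there is no in-paper proof to compare against and your argument has to stand on its own. Your overall strategy --- normalize $\sigma_\alpha(0)=0$, note that $\Pi_\alpha$ is recovered from $\sigma_\alpha|_N$ together with $\epsilon_\alpha$ via $(\rho_\alpha)_0=\sigma_\alpha^{-1}(\rho_0)_0^{\epsilon_\alpha}\sigma_\alpha$, and then count the possible data --- is a reasonable plan, but the counting step has a genuine gap. The assertion that the triple $\bigl(\sigma_\alpha(y_0),\epsilon_\alpha,[\sigma_\alpha]\bigr)$ ``lies in a set of size $|N|\cdot 2\cdot|Aut_0(\Pi_0)|$'' is unjustified: the first two coordinates account for the factors $|N|$ and $2$, but nothing in your argument bounds the number of cosets $[\sigma_\alpha]$ that can occur. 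The freeness of the $Aut_0(\Pi_0)$-action only gives that, \emph{within a fixed coset}, $\sigma_\alpha$ is determined by $\sigma_\alpha(y_0)$; it says nothing about how many cosets there are. Equivalently, what you actually need is that the fibre of $\Pi_\alpha\mapsto(\sigma_\alpha(y_0),\epsilon_\alpha)$ has size at most $|Aut_0(\Pi_0)|$, and this does not follow from Propositions \ref{CondizioneLocale1} and \ref{CondizioneLocale2}. Those propositions work because for an \emph{automorphism} the source and target rotations at $0$ coincide, which is what makes the recursion $\sigma(x_{j+1})=\rho_0^{\pm1}(\sigma(x_j))$ close up after one free choice. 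For an isomorphism $\sigma_\alpha\colon\Pi_\alpha\to\Pi_0$ the analogous recursion reads $\sigma_\alpha\bigl((\rho_\alpha)_0^{\,j}(y_0)\bigr)=(\rho_0)_0^{\epsilon_\alpha j}(\sigma_\alpha(y_0))$, so $\sigma_\alpha(y_0)$ and $\epsilon_\alpha$ determine $\sigma_\alpha|_N$ only \emph{in terms of the unknown cycle} $(\rho_\alpha)_0$, and conversely; locally at the vertex $0$ every cyclic permutation of $N$ is a candidate for $(\rho_\alpha)_0$. Whatever cuts the count down to $2|Aut_0(\Pi_0)|\cdot|N|$ must come from compatibility of $\sigma_\alpha$ with the rotations at the remaining vertices (which are translates of the rotation at $0$ in both $\Pi_0$ and $\Pi_\alpha$), and your argument never invokes this global constraint.

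A secondary issue: the reduction ``$\rho_\alpha$ is determined by its local action at $0$'' uses that the translations act as \emph{orientation-preserving} automorphisms of $\Pi_\alpha$, which is a hypothesis only in the second half of the statement. For the first bound you would also have to account for the orientation character of the translation subgroup (an index-$1$ or $2$ issue), or explain why it is absorbed into the stated constants.
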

Since in our case we have that $Aut_0^+(\Pi_0)=Aut_0(\Pi_0)=id$ and $|N(K_{\frac{2nk+t}{t}\times t},0)|=2nk$, we can state the following corollaries.
\begin{cor}
Let $t\not\equiv 0\pmod{4}$ be a fixed positive integer and let $n$ and $k<n$ be as in Proposition \ref{ElencoResults} and such that $t|2nk$. Then the number of non-isomorphic biembeddings $\Pi$ of $K_{\frac{2nk+t}{t}\times t}$ such that:
\begin{itemize}
\item[a)] $Aut(\Pi)=\Z_{2nk+t}$,
\item[b)] the faces determined by $\Pi$ have lengths multiples of $k$,
\end{itemize}
is at least on the order of
$$\left(1-O\left(\frac{1}{nk}\right)\right)\frac{(nk)!}{4(nk)}.$$\end{cor}
\begin{cor}
Let $t\not\equiv 0\pmod{4}$ be a fixed positive integer and let $n$ and $k<n$ be as in Proposition \ref{ElencoResults} and such that $t|2nk$. Then the number of non-isomorphic biembeddings $\Pi$ of $K_{\frac{2nk+t}{t}\times t}$ such that:
\begin{itemize}
\item[a)] $Aut(\Pi)=\Z_{2nk+t}$,
\item[b)] the faces determined by $\Pi$ have lengths multiples of $k$ strictly greater than $k$,
\end{itemize}
is at least of order
$$\left(1-\left(\frac{2n}{nk-(k-1)}\right)\right)\left(1-O\left(\frac{1}{nk}\right)\right)\frac{(nk)!}{4(nk)}=\left(1-O\left(\frac{1}{k}\right)\right)\left(1-O\left(\frac{1}{nk}\right)\right)\frac{(nk)!}{4(nk)}$$
where the term $\left(1-O\left(\frac{1}{k}\right)\right)$ is strictly positive for any $k\geq 3$.

Moreover, if $v=2nk+t$ is a prime (that can occur only if $t=1$), all these biembeddings of $K_{v}$ are $kv$-gonal.
\end{cor}
Now we see how, exploiting our result on the cardinality of $Aut_0$, we can improve these bounds with a computation that is independent of the results of \cite{CostaPasotti2}.
\begin{lem}\label{Diagramma}
Let $\Pi$ and $\Pi'$ be two $\mathbb{Z}_v$-regular embeddings of $K_{m\times t}$ that are isomorphic through the isomorphism $\sigma$. Given $g\in \mathbb{Z}_v$ we set $\phi_{\sigma,g}:=\sigma\circ (\tau_g)^{-1}\circ (\sigma^{-1})\circ \tau_{\sigma(g)}$.

Then $\phi_{\sigma,g}=id$ if and only if the following diagram commutes
\begin{center}
\begin{tikzcd}
\Pi \arrow{r}{\sigma} \arrow{d}{\tau_g}
&\Pi' \arrow{d}{\tau_{\sigma_1(g)}}\\
\Pi \arrow{r}{\sigma} &\Pi'
\end{tikzcd}
\end{center}

In particular, identifying $V(K_{m\times t})$ with $\Z_v$, we have that $\phi_{\sigma,g}=id$ for any $g\in \mathbb{Z}_v$ if and only if $\sigma$ is a group automorphism of $\mathbb{Z}_v$.
\end{lem}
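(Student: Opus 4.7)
My plan is to prove both assertions by purely algebraic manipulations: the first equivalence will be a one-line rewriting of the definition of $\phi_{\sigma,g}$, and the second will be obtained by reading the resulting intertwining relation pointwise on $\mathbb{Z}_v$.

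For the first stage, I would start from the definition
\[\phi_{\sigma,g}=\sigma\circ \tau_g^{-1}\circ \sigma^{-1}\circ \tau_{\sigma(g)}\]
and observe that $\phi_{\sigma,g}=id$ is equivalent to $\sigma\circ \tau_g^{-1}\circ \sigma^{-1}=\tau_{\sigma(g)}^{-1}$, which after inversion reads
\[\sigma\circ \tau_g=\tau_{\sigma(g)}\circ \sigma.\]
This is exactly what commutativity of the square in the statement asserts (reading the two paths from the top-left vertex $\Pi$ to the bottom-right vertex $\Pi'$, after correcting the obvious typo $\sigma_1\mapsto\sigma$). The only property of $\sigma$ used here is that it is a bijection on the vertex set, which is automatic since $\sigma$ is an embedding isomorphism; no face-preservation is needed for this equivalence.

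For the ``in particular'' part, I would read the identity $\sigma\circ \tau_g=\tau_{\sigma(g)}\circ \sigma$ vertex by vertex: at $x\in \mathbb{Z}_v$ it says $\sigma(x+g)=\sigma(x)+\sigma(g)$. Requiring this for every $g$ and specializing first to $x=0$ forces $\sigma(0)=0$; the relation then says precisely that $\sigma$ is additive. Since $\sigma$ is already a bijection of $V(K_{m\times t})=\mathbb{Z}_v$, an additive bijection fixing $0$ is exactly a group automorphism of $\mathbb{Z}_v$, so the forward implication follows. Conversely, if $\sigma$ is a group automorphism of $\mathbb{Z}_v$, then $\sigma(x+g)=\sigma(x)+\sigma(g)$ holds by definition, so $\sigma\circ \tau_g=\tau_{\sigma(g)}\circ \sigma$ for every $g$, and the first stage gives $\phi_{\sigma,g}=id$ for every $g$.

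The argument is entirely elementary and I do not foresee any essential obstacle. The only delicate point in the ``for all $g$'' direction is that one must not assume in advance that $\sigma$ fixes $0$: the equality $\sigma(0)=0$ has to be deduced by specializing the commuting square to $x=0$ before additivity can be promoted to a group-homomorphism property.
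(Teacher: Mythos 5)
Your proposal is correct and follows essentially the same route as the paper: rewrite $\phi_{\sigma,g}=id$ as the intertwining relation $\sigma\circ \tau_g=\tau_{\sigma(g)}\circ \sigma$, then read it pointwise as $\sigma(x+g)=\sigma(x)+\sigma(g)$ and observe that validity for all $g$ is exactly the group-automorphism property. Your extra remark that specializing to $x=0$ yields $\sigma(0)=0$ is a small detail the paper leaves implicit, not a different argument.
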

\begin{proof}
Because of the definition we have that $\phi_{\sigma,g}=\sigma\circ (\tau_g)^{-1}\circ (\sigma^{-1})\circ \tau_{\sigma(g)}$. Therefore $\phi_{\sigma,g}=id$ is equivalent to $$ \tau_g\circ \sigma^{-1}=\sigma^{-1}\circ \tau_{\sigma(g)},$$
and to
\begin{equation}\label{eq5} \sigma\circ \tau_g= \tau_{\sigma(g)}\circ \sigma.\end{equation}
i.e. the diagram commutes.

Equation \refeq{eq5} can be also written as
\begin{equation}\label{eq6}\sigma(x+g)=(\sigma(x)+\sigma(g)).\end{equation}
Note that, if equation \refeq{eq6} holds for any $g\in \mathbb{Z}_v$, it means that $\sigma$ is a group automorphism of $\mathbb{Z}_v$.
\end{proof}
To state a refinement of Proposition \ref{upperboundFamily}, we recall that the number of automorphisms of $\Z_v$ is $\phi(v)$, where $\phi(\cdot)$ is Euler's totient function.
\begin{prop}\label{Family2}
Let $\mathcal{F}=\{\Pi_{\alpha}: \alpha \in \mathcal{A}\}$ be a family of $\mathbb{Z}_{v}$-regular distinct embeddings of $K_{m\times t}$ where $v=mt$. Then, if $\Pi_{\alpha}$ is isomorphic to $\Pi_0$ for any $\alpha\in \mathcal{A}$ and if $Aut_0(\Pi_0)=\{id\}$, we have that
$$|\mathcal{F}|\leq 2\phi(v).$$
\end{prop}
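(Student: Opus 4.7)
The plan is to first nail down the full automorphism group $Aut(\Pi_\alpha)$ for every $\alpha$, then show that any isomorphism $\sigma : \Pi_0 \to \Pi_\alpha$ is, up to a translation, a group automorphism of $\mathbb{Z}_v$, and finally count.

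Since $\Pi_0$ is $\mathbb{Z}_v$-regular, the translation subgroup $T = \{\tau_g : g \in \mathbb{Z}_v\}$ is contained in $Aut(\Pi_0)$. Combined with the hypothesis $Aut_0(\Pi_0) = \{id\}$, the orbit--stabilizer theorem gives $|Aut(\Pi_0)| = v$ and hence $Aut(\Pi_0) = T$. Every $\Pi_\alpha$ is isomorphic to $\Pi_0$, so their automorphism groups are abstractly isomorphic and in particular $|Aut(\Pi_\alpha)| = v$; together with $T \subseteq Aut(\Pi_\alpha)$ (from $\mathbb{Z}_v$-regularity of $\Pi_\alpha$) this forces $Aut(\Pi_\alpha) = T$ as well.

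For the rigidity step I fix, for each $\alpha \in \mathcal{A}$, an isomorphism $\sigma_\alpha : \Pi_0 \to \Pi_\alpha$. After replacing $\sigma_\alpha$ with $\tau_{-\sigma_\alpha(0)} \circ \sigma_\alpha$ (still an iso to $\Pi_\alpha$, because $\tau_{-\sigma_\alpha(0)} \in Aut(\Pi_\alpha) = T$), I may assume $\sigma_\alpha(0) = 0$. For every $g \in \mathbb{Z}_v$ the conjugate $\sigma_\alpha \circ \tau_g \circ \sigma_\alpha^{-1}$ is an automorphism of $\Pi_\alpha$, hence a translation; evaluating at $0$ identifies it with $\tau_{\sigma_\alpha(g)}$. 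Lemma \ref{Diagramma} then yields $\sigma_\alpha \in Aut(\mathbb{Z}_v)$. Consequently every isomorphism $\sigma : \Pi_0 \to \Pi_\alpha$ has the form $\sigma(x) = c + \psi(x)$ with $c \in \mathbb{Z}_v$ and $\psi \in Aut(\mathbb{Z}_v)$.

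For the counting, each such isomorphism carries an orientation behavior (preserving or reversing), and the pair consisting of the underlying function $\sigma$ and its orientation determines the target $\Pi_\alpha$ uniquely as the embedding whose rotation is $\sigma \circ \rho_0^{\pm 1} \circ \sigma^{-1}$. Hence the number of (function, orientation) pairs admitting a target in $\mathcal{F}$ is at most $2 \cdot v \cdot \phi(v)$. On the other hand, any two isomorphisms $\sigma_1, \sigma_2 : \Pi_0 \to \Pi_\alpha$ differ by an element of $Aut(\Pi_0) = T$, which is orientation preserving, so each $\Pi_\alpha$ is the target of exactly $v$ such pairs. Comparing the two counts gives $|\mathcal{F}| \cdot v \leq 2 v \phi(v)$, i.e.\ $|\mathcal{F}| \leq 2\phi(v)$.

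The critical step is the rigidity argument: one must exploit that $Aut(\Pi_\alpha)$ equals \emph{exactly} the translation group, with no hidden extra symmetry, in order to apply Lemma \ref{Diagramma} and conclude that isomorphisms are $\mathbb{Z}_v$-affine. This is precisely where the hypothesis $Aut_0(\Pi_0) = \{id\}$ is used; once it is in hand, the remaining counting is routine.
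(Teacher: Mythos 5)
Your proof is correct and follows essentially the same route as the paper: the hypothesis $Aut_0(\Pi_0)=\{id\}$ together with Lemma \ref{Diagramma} forces every isomorphism, once normalized to fix $0$, to be a group automorphism of $\Z_v$, and the bound then comes from a count in which the factor $2$ records whether the target rotation is $\sigma\circ\rho_0\circ\sigma^{-1}$ or its inverse. The only cosmetic differences are that you first establish $Aut(\Pi_\alpha)=T$ via orbit--stabilizer and finish with a double count over (affine map, orientation) pairs, whereas the paper pigeonholes the $\phi(v)$ normalized isomorphisms and observes that a single map can be an isomorphism onto at most two distinct embeddings, namely those with rotations $\rho$ and $\rho^{-1}$.
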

\begin{proof}
We can assume $\Pi_0\in \mathcal{F}$ and let us denote by $\sigma_\alpha$ an isomorphism between $\Pi_\alpha$ and $\Pi_0$ that fixes $0$. Note that this isomorphism exists since $\mathcal{F}$ is a family of $\mathbb{Z}_v$-regular embeddings.

Now we prove that, for any $\alpha \in \mathcal{A}$ and any $g\in \Z_v$, $\phi_{\sigma_{\alpha},g}$ is an element of $Aut_0(\Pi_0)$. Since $\phi_{\sigma_{\alpha},g}$ is a composition of isomorphisms, it is an embedding isomorphism. It is also easy to see that $\phi_{\sigma_{\alpha},g}: \Pi_0\rightarrow \Pi_0$ and hence it is an element of $Aut(\Pi_0)$. Because of the definition, we have that
$$\phi_{\sigma_{\alpha},g}(0)=\sigma_{\alpha}\circ (\tau_g)^{-1}\circ (\sigma_{\alpha}^{-1})\circ \tau_{\sigma_{\alpha}(g)}(0)=$$ $$\sigma_{\alpha}\circ (\tau_g)^{-1}\circ (\sigma_{\alpha}^{-1}) (\sigma_{\alpha}(g))=\sigma_{\alpha}\circ (\tau_g)^{-1}(g)=\sigma_{\alpha}(0)=0.$$
This means that $\phi_{\sigma_{\alpha},g}$ is an element of $Aut_0(\Pi_0)$. On the other hand, we know that $Aut_0(\Pi_0)=\{id\}$ and hence $\phi_{\sigma_{\alpha},g}=id$ for any $g\in \Z_v$. Due to Lemma \ref{Diagramma}, this implies that $\sigma_{\alpha}$ is a group automorphism of $\Z_v$.

Let now assume, by contradiction, that $|\mathcal{F}|>2\phi(v)$. Since the number of automorphisms of $\Z_v$ is $\phi(v)$ and each $\sigma_{\alpha}$ is an automorphism of $\Z_v$, due to the pigeonhole principle, there exists $\Pi_1, \Pi_2$ and $\Pi_3$ that are isomorphic to $\Pi_0$ through the same map $\bar{\sigma}: \Z_v\rightarrow \Z_v$. Hence we would have that the identity is an isomorphism both from $\Pi_1=(K_{m\times t},\rho_1)$ to $\Pi_2=(K_{m\times t},\rho_2)$ and from $\Pi_1=(K_{m\times t},\rho_1)$ to $\Pi_3=(K_{m\times t},\rho_3)$. It follows from Definition \ref{DefEmbeddings} that $\rho_2, \rho_3\in \{\rho_1, \rho_1^{-1}\}$. But this means that either $\Pi_1=\Pi_2$ or $\Pi_1=\Pi_3$ or $\Pi_2=\Pi_3$. In each of these cases, we would obtain that the elements of $\mathcal{F}$ are not all distinct which contradicts the hypotheses.
\end{proof}
\begin{rem}
Note that Proposition \ref{Family2} is always an improvement of Proposition \ref{upperboundFamily}. Indeed, set $v=mt$, we have that the elements of $m\Z_v$ are not invertible and hence
$$\phi(v)=|\{x\in \Z_v:\ x\mbox{ is invertible}\}|\leq |\Z_v\setminus m\Z_v|=v-t=(m-1)t.$$
But this means that $2\phi(v)\leq 2(m-1)t=2|N(K_{m\times t},0)|$ that is the bound of Proposition \ref{upperboundFamily} assuming that $|Aut_0|=\{id\}$.
\end{rem}
As a consequence of Proposition \ref{Family2}, we obtain the following theorems.
\begin{thm}
Let $t\not\equiv 0\pmod{4}$ be a fixed positive integer and let $n$ and $k<n$ be as in Proposition \ref{ElencoResults} and such that $t|2nk$. Then the number of non-isomorphic biembeddings $\Pi$ of $K_{\frac{2nk+t}{t}\times t}$ such that:
\begin{itemize}
\item[a)] $Aut(\Pi)=\Z_{2nk+t}$,
\item[b)] the faces determined by $\Pi$ have lengths multiples of $k$,
\end{itemize}
is, setting $v=2nk+t$, at least on the order of
$$\left(1-O\left(\frac{1}{nk}\right)\right)\frac{(nk)!}{2\phi(2nk+t)}=\left(1-o(1)\right)\frac{(\frac{v-t}{2})!}{2\phi(v)}.$$
\end{thm}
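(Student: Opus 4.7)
The plan is to combine the count of distinct $\mathbb{Z}_v$-regular embeddings provided by Corollary \ref{NumberEmbeddingsQH} with the isomorphism-class-size bound coming from Proposition \ref{Family2}. Concretely, Corollary \ref{NumberEmbeddingsQH} produces a family $\mathcal{F}$ of at least $\left(1-O\!\left(\tfrac{1}{nk}\right)\right)(nk)!$ pairwise distinct biembeddings of $K_{\frac{v}{t}\times t}$, each of which satisfies both conditions (a) and (b) of the statement. The non-isomorphic count is then obtained by partitioning $\mathcal{F}$ into isomorphism classes and dividing through by an upper bound on the size of each class.

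The key observation I would emphasize first is that condition (a) forces $Aut_0(\Pi)=\{id\}$ for every $\Pi\in\mathcal{F}$. Indeed, under the identification $V(K_{\frac{v}{t}\times t})=\mathbb{Z}_v$ used throughout Section 3, the translation subgroup acts regularly on the vertex set and is contained in $Aut^+(\Pi)$. Since by hypothesis $Aut(\Pi)=\mathbb{Z}_v$ has exactly $v$ elements, $Aut(\Pi)$ must coincide with the translation subgroup; consequently the only element of $Aut(\Pi)$ fixing the vertex $0$ is the identity, i.e. $Aut_0(\Pi)=\{id\}$ as required.

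Having verified this hypothesis, Proposition \ref{Family2} applies to the isomorphism class of any $\Pi_0\in\mathcal{F}$: every subfamily of $\mathcal{F}$ consisting of embeddings mutually isomorphic to some fixed $\Pi_0$ has cardinality at most $2\phi(v)$. Grouping the elements of $\mathcal{F}$ by isomorphism class, the number of classes is therefore at least
$$\frac{|\mathcal{F}|}{2\phi(v)}\;\geq\;\left(1-O\!\left(\tfrac{1}{nk}\right)\right)\frac{(nk)!}{2\phi(v)}.$$
Rewriting $nk=\tfrac{v-t}{2}$ and absorbing the error term into $1-o(1)$ yields the claimed lower bound $\left(1-o(1)\right)\frac{\left(\frac{v-t}{2}\right)!}{2\phi(v)}$.

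In truth there is no serious obstacle here: the theorem is essentially an accounting step once Corollary \ref{NumberEmbeddingsQH} and Proposition \ref{Family2} are in hand. The only point that deserves attention is checking that the hypothesis of Proposition \ref{Family2} holds uniformly on $\mathcal{F}$, not only for a single representative; but this is immediate from condition (a), as argued above. The asymptotic rewriting in terms of $v$ is purely cosmetic.
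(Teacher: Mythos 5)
Your proposal is correct and follows exactly the route the paper intends: the theorem is stated as a direct consequence of Corollary \ref{NumberEmbeddingsQH} (which supplies the $\left(1-O\left(\frac{1}{nk}\right)\right)(nk)!$ distinct embeddings satisfying (a) and (b)) together with Proposition \ref{Family2} bounding each isomorphism class by $2\phi(v)$, the hypothesis $Aut_0(\Pi_0)=\{id\}$ being guaranteed by condition (a) as you note. Your explicit verification that $Aut(\Pi)=\Z_v$ acting by translations forces the stabilizer of $0$ to be trivial is exactly the (unstated) bookkeeping the paper relies on.
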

\begin{thm}
Let $t\not\equiv 0\pmod{4}$ be a fixed positive integer and let $n$ and $k<n$ be as in Proposition \ref{ElencoResults} and such that $t|2nk$. Then the number of non-isomorphic biembeddings $\Pi$ of $K_{\frac{2nk+t}{t}\times t}$ such that:
\begin{itemize}
\item[a)] $Aut(\Pi)=\Z_{2nk+t}$,
\item[b)] the faces determined by $\Pi$ have lengths multiples of $k$ strictly greater than $k$,
\end{itemize}
is, setting $v=2nk+t$, at least on the order of
$$\left(1-\left(\frac{2n}{nk-(k-1)}\right)\right)\left(1-O\left(\frac{1}{nk}\right)\right)\frac{(nk)!}{2\phi(2nk+t)}=$$
$$\left(1-O\left(\frac{1}{k}\right)\right)\left(1-o(1)\right)\frac{(\frac{v-t}{2})!}{2\phi(v)}$$
where the term $\left(1-O\left(\frac{1}{k}\right)\right)$ is strictly positive for any $k\geq 3$.

Moreover, if $v$ is a prime (that can occur only if $t=1$), all these biembeddings of $K_{v}$ are $kv$-gonal.
\end{thm}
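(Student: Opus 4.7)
The plan is to combine the distinct-embedding count of Corollary \ref{NumberEmbeddingsNH} with the isomorphism-class upper bound of Proposition \ref{Family2}. Let $\F$ denote the family of distinct Archdeacon biembeddings of $K_{\frac{v}{t}\times t}$ produced from the $\N\H_t(n;k)$ arrays counted in Corollary \ref{NumberEmbeddingsNH}. By construction, every $\Pi\in\F$ is $\Z_v$-regular (Theorem \ref{HeffterBiemb}), satisfies $Aut(\Pi)=\Z_v$, and has faces whose lengths are multiples of $k$ strictly greater than $k$. Since the $v$ translations $\tau_g$ already exhaust a $\Z_v$-subgroup of $Aut(\Pi)$, the hypothesis $Aut(\Pi)=\Z_v$ forces the stabilizer $Aut_0(\Pi)$ of $0$ to be trivial.

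Next, I would partition $\F$ into isomorphism classes $\F_1,\ldots,\F_s$ and fix a representative $\Pi_0\in\F_i$ in each class. Because all the embeddings in $\F_i$ are $\Z_v$-regular and $Aut_0(\Pi_0)=\{id\}$, Proposition \ref{Family2} applies verbatim and yields $|\F_i|\le 2\phi(v)$. Summing this bound over $i$ and using the lower bound on $|\F|$ from Corollary \ref{NumberEmbeddingsNH}, one obtains
$$s \;\ge\; \frac{|\F|}{2\phi(v)}\;\ge\; \left(1-\frac{2n}{nk-(k-1)}\right)\left(1-O\!\left(\frac{1}{nk}\right)\right)\frac{(nk)!}{2\phi(v)}.$$
Substituting $nk=(v-t)/2$ rewrites the right-hand side as $(1-O(1/k))(1-o(1))\,\frac{((v-t)/2)!}{2\phi(v)}$, giving the asymptotic stated in the theorem.

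For the final sentence, I would invoke the face-length description that precedes Theorem 2.6: each row $R$ (resp.\ column $C$) contributes faces of length $h\lambda_r$ (resp.\ $k\lambda_c$), where $\lambda_r$ (resp.\ $\lambda_c$) is the smallest positive integer with $\lambda_r(\Sigma R)\equiv 0\pmod{v}$ (resp.\ $\lambda_c(\Sigma C)\equiv 0\pmod{v}$). If $v$ is prime and $A$ is an $\N\H_t(n;k)$, then every row sum and column sum is a \emph{nonzero} element of $\Z_v$, hence invertible, which forces $\lambda_r=\lambda_c=v$. Since the arrays in Proposition \ref{ElencoResults} are square ($h=k$), every face has length exactly $kv$, proving the $kv$-gonality.

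The only delicate point I anticipate is verifying that the distinct combinatorial embeddings counted in Corollary \ref{NumberEmbeddingsNH} really are pairwise non-equal as embeddings, so that dividing by the maximal isomorphism-class size $2\phi(v)$ is legitimate; this, however, is precisely what Proposition \ref{differentEmbeddings2} guarantees (via the uniqueness of the cyclic shift $\ell$ for the solutions of $P(A)$ from Proposition \ref{ElencoResults}, which is why the assumption $n>k$ is needed). Beyond this already settled bookkeeping, the theorem is a direct combination of the probabilistic count, the class-size bound from Proposition \ref{Family2}, and the invertibility of nonzero residues modulo a prime.
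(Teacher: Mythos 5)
Your proposal is correct and follows essentially the same route as the paper: the theorem is obtained exactly by dividing the count of distinct embeddings from Corollary \ref{NumberEmbeddingsNH} by the isomorphism-class bound $2\phi(v)$ of Proposition \ref{Family2} (applicable since $Aut(\Pi)=\Z_v$ forces $Aut_0(\Pi)=\{id\}$), with the $kv$-gonality in the prime case coming from the nonzero (hence invertible) row and column sums of the non-zero sum Heffter arrays.
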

\section*{Acknowledgements}
The author would like to thank Stefano Della Fiore and Anita Pasotti for our useful discussions on this topic.
The author was partially supported by INdAM--GNSAGA. This work was supported in part
by the European Union under the Italian National Recovery and Resilience Plan (NRRP) of
NextGenerationEU, Partnership on “Telecommunications of the Future,” Program “RESTART” under
Grant PE00000001, “Netwin” Project (CUP E83C22004640001).

\end{document}